\documentclass[11pt,reqno,a4paper]{amsart}

\usepackage{colonequals}

\usepackage{amssymb,amsmath,amsthm,newlfont,enumerate}
\usepackage{bbold}
\usepackage{a4wide}
\usepackage{enumitem}

\usepackage{tikz}
\usetikzlibrary{positioning}
\usepackage[%% pdftex,% might be luatex, just allow automatic default
colorlinks, hyperindex, plainpages=false, bookmarksopen, bookmarksnumbered, pdfusetitle,  linkcolor=blue, citecolor=blue ]{hyperref}

\newtheorem{theorem}{Theorem}%  meant for continuous numbers
\newtheorem{proposition}[theorem]{Proposition}% 
\newtheorem{theo}{Theorem}[section]

\newtheorem{lemma}{Lemma}[section]

\newcommand{\RR}{\mathbb{R}}

\title[Schauder--Orlicz-Type Estimates for Divergence-Form Elliptic Equations]{Schauder--Orlicz-Type Estimates for Divergence-Form Elliptic Equations with Lower-Order Terms}
\author{J. Bourabiaa, Y. Elmadani, A. Hanine}
\subjclass[2020]{35B45, 35J25, 46E30}
\keywords{ Schauder-type estimates, divergence-form elliptic equations, Orlicz spaces}
\date{}
\address{Laboratory of Mathematical Analysis and Applications,
	Mohammed V University in Rabat,
	B.P. 1014, Rabat, Morocco}
\email{jaouad.bourabiaa@gmail.com}
\email{elmadanima@gmail.com}
\email{abhanine@gmail.com}
\begin{document} 
	\begin{abstract}
Schauder–Orlicz-type estimates are derived for weak solutions to second-order linear elliptic equations in divergence form with lower-order terms. The Orlicz setting $X=L^\psi$ is treated first. Under suitable assumptions on the Young function $\psi$ and on the coefficients, the optimal associated space for the lower-order datum is identified. An \textit{a priori} estimate in $W^{1,\psi}$ is then obtained. The discussion is next extended to rearrangement-invariant Banach function spaces. A class $(\mathcal C)$ is introduced to characterize the spaces $X$ for which a corresponding associated space $Y$ yields Schauder-type estimates. Lorentz spaces are finally examined as concrete examples.
	\end{abstract}

	\maketitle
\section{Introduction}\label{sec1}
Let $d \ge 3$, and let $\Omega \subset \mathbb{R}^d$ be a bounded domain with $C^{1}$ boundary. We consider a second-order linear elliptic equation in divergence form, including lower-order terms, of the form
\begin{equation}\label{eq.1}
	\left\{
	\begin{aligned}
		-\operatorname{div}\big(\mathbf A\nabla u+\mathbf B\,u+\mathbf F\big)+\mathbf C\cdot \nabla u+\mathbf V\,u &= g
		& \text{in } \Omega,\\
		u &= 0 & \text{on } \partial \Omega.
	\end{aligned}
	\right.
\end{equation}

Equations of the form \eqref{eq.1} have been the object of a vast literature. They arise naturally in the study of second-order elliptic operators in divergence form and play a fundamental role in analysis, notably in connection with Sobolev spaces, variational methods, and regularity theory. From a probabilistic perspective, such operators are closely related to diffusion processes and Dirichlet forms. In addition, they arise in a wide range of applications, including models of diffusion and transport in physics, as well as in biological systems such as population dynamics and chemotaxis. We refer to standard references for a comprehensive account of these developments \cite{Arumugam2020KellerSegelCM,bass1998diffusions,bogachev1997elliptic,bogachev2001regularity,bogachev2022fokker,gilbarg1998elliptic,morrey1966multiple,nagasawa2012schrodinger}.
\\

We now specify the structural assumptions on the coefficients in \eqref{eq.1}. The leading coefficient $\mathbf A=(\mathbf A^{ij})\colon \Omega\to\mathbb{R}^{d\times d}$ is assumed to be measurable and bounded, and to satisfy the uniform ellipticity condition: there exist constants $m,M>0$ such that
\begin{equation}
	m|\xi|^2\le \mathbf A(x)\,\xi\cdot\xi, \qquad |\mathbf A(x)|\le M
\end{equation}
for all $\xi\in\mathbb{R}^d$ and for almost every $x\in \Omega$. No symmetry of $\mathbf A$ is assumed.

We next impose integrability and local control assumptions on the lower-order coefficients. In the natural energy framework, we assume that $\mathbf B,\mathbf C \in L^{d}(\Omega)$ and $\mathbf V \in L^{d/2}(\Omega)$. In addition, there exist constants $c_0,k_{0}>0$ such that
\[
\int_{B_r(x)\cap \Omega}\bigl(|\mathbf B|^{2}+|\mathbf C|^{2}+|\mathbf V|\bigr)^{d/2}\,dx
\le \bigl(c_0r^{k_0}\bigr)^{d/2}
\]
for every ball $B_r(x)\subset \mathbb{R}^d$. 

Under the above assumptions, we consider the associated shifted variational formulation of \eqref{eq.1}. More precisely, we seek $u\in W^{1,2}_{0}(\Omega)$ such that
\[
\mathcal B(u,v)+\lambda \langle u,v\rangle_{L^2(\Omega)}=\mathcal L(v), \qquad v\in W^{1,2}_{0}(\Omega).
\]
It is well known that this problem satisfies the Fredholm alternative in $W^{1,2}_{0}(\Omega)$. More precisely, for every $\mathbf F\in L^{2}(\Omega)$ and $g\in L^{\frac{2d}{d+2}}(\Omega)$, there exists a discrete set $\mathcal E\subset \mathbb{C}$, without finite accumulation points, such that for each $\lambda\notin \mathcal E$ the variational problem admits a unique solution $u\in W^{1,2}_{0}(\Omega)$. In contrast, for $\lambda\in \mathcal E$, the associated homogeneous problem may possess a finite-dimensional space of nontrivial solutions.

We now turn to regularity issues, which are central to the analysis of \eqref{eq.1}. While the Fredholm framework ensures existence and uniqueness, it does not by itself yield sufficient information on the qualitative properties of solutions. In particular, the case $\lambda=0$, corresponding to \eqref{eq.1}, shows that additional regularity estimates are required, even when $0\notin \mathcal E$ and uniqueness holds in $W^{1,2}_{0}(\Omega)$.

In this direction, one has interior $L^{2}$-estimates for the gradient. Let $D\Subset \Omega$, set $\delta_0=\operatorname{dist}(D,\partial \Omega)$, and assume that $\Omega\subset B_{R}(x_{1})$. Then any solution $u\in W^{1,2}_{0}(\Omega)$ of \eqref{eq.1} satisfies
\[
\|\nabla u\|_{L^2(D)}\le c\Big(\delta_0^{-1}\|u\|_{L^2(\Omega)}+\|\mathbf{F}\|_{L^2(\Omega)}+\|g\|_{L^{\frac{2d}{d+2}}(\Omega)}\Big),
\]
where $c$ depends only on $d$, $m$, $M$, $c_0$, $k_0$, and $R$. Under stronger assumptions on the coefficients, one can further obtain $W^{2,2}$-regularity, both locally in $\Omega$ and globally up to the boundary. We refer to \cite{morrey1966multiple} for further details.

In this paper, we investigate the following regularity problem for \eqref{eq.1}. Let $X(\Omega)\subset L^{2}(\Omega)$ be a given data space for $\mathbf F$. Under suitable assumptions on the coefficients $\mathbf A$, $\mathbf B$, $\mathbf C$, and $\mathbf V$, we aim to identify the optimal space $Y(\Omega)$ associated with $X(\Omega)$ such that, for every $\mathbf F\in X(\Omega)$ and $g\in Y(\Omega)$, any weak solution $u$ to \eqref{eq.1} belongs to $W^{1}X(\Omega)$, where
\[
W^{1}X(\Omega)=\{v\in X(\Omega)\,:\,\nabla v\in X(\Omega)\},
\]
and satisfies the estimate
\begin{equation}\label{est.1}
	\|u\|_{W^{1} X(\Omega)}\lesssim \|u\|_{L^1(\Omega)}+\|\mathbf{F}\|_{X(\Omega)}+\|g\|_{Y(\Omega)}.
\end{equation}

In the case of $X=L^{q}(\Omega)$, the associated space is $Y=L^{p}(\Omega)$, where $p=\frac{dq}{d+q}$. Assume that $\mathbf A$ is continuous on an open set $\mathcal O\supset \overline{\Omega}$, and that $\mathbf B$, $\mathbf C$, and $\mathbf V$ are measurable with
\[
\mathbf B \in L^{q}(\mathcal O), \qquad \mathbf C \in L^{d}(\mathcal O), \qquad \mathbf V \in L^{p}(\mathcal O).
\]
Then, for every $\mathbf F \in L^{q}(\Omega)$ and $g\in L^{p}(\Omega)$, any weak solution $u$ to \eqref{eq.1} belongs to $W^{1,q}(\Omega)$ and satisfies
\[
\|u\|_{W^{1,q}(\Omega)}\le c\bigl(\|u\|_{L^1(\Omega)}+\|\mathbf{F}\|_{L^q(\Omega)}+\|g\|_{L^p(\Omega)}\bigr),
\]
where $c$ is independent of $u$, $\mathbf F$, and $g$; see \cite{morrey1966multiple}.

The Lebesgue scale is not sufficient, in general, to determine optimal data spaces \(X\). In critical regimes, the associated space \(Y\) may not belong to the Lebesgue scale and may require a modified growth condition. This phenomenon appears, for example, in the reduced Keller--Segel model. In the supercritical regime, the Hölder regularity obtained in the Lebesgue setting is sufficient, but this is no longer true in the critical case. Orlicz spaces provide an appropriate framework in this situation, since they allow the required correction of the growth condition while preserving the functional properties needed in the analysis; see \cite{musil2023optimality}.

We work under the following assumptions. Let $\psi$ be an $N$-function such that $\psi\in \Delta_{2}\cap \nabla_{2}$ and
\[
\mathcal{I}_{\psi,1}^{\infty}:=\int^{\infty}\left(\frac{t}{\psi(t)}\right)^{1/(d-1)}dt<\infty.
\]
We then take $X=L^{\psi}(\Omega)$ as the data space for $\mathbf F$; see Section~\ref{sec3} for further details. We aim to identify the space $Y(\Omega)$ associated with $X$ such that, for $\mathbf F\in X$ and $g\in Y(\Omega)$, any weak solution $u$ to \eqref{eq.1} satisfies the \emph{a priori} estimate \eqref{est.1} in $W^{1,\psi}(\Omega)$. In this setting, the estimate yields continuity of $u$ with a modulus of continuity determined by $\psi$. We shall refer to such bounds as \textit{Schauder--Orlicz--type estimates}. For related results on gradient estimates in Orlicz spaces for nonlinear elliptic equations, see the following references \cite{byun2008gradient,byun2011gradient}.
 
We say that the coefficients $\mathbf A$, $\mathbf B$, $\mathbf C$, and $\mathbf V$ satisfy the $\mathcal H_{\psi}$-condition on a domain $\mathcal O\subset\RR^d$ if $\mathbf A\in C(\mathcal O)$ and the functions $\mathbf B$, $\mathbf C$, and $\mathbf V$ are measurable with
\[
\mathbf B \in L^{\psi}(\mathcal O), \qquad \mathbf C \in L^{d}(\mathcal O), \qquad \mathbf V \in L^{\gamma}(\mathcal O),
\]
where $\gamma = [\psi_d^*]^*$.

The relation between $\psi$ and $\gamma$ can be expressed in terms of their inverse functions. Since $\gamma=[\psi_d^*]^*$, one has
\[
\gamma^{-1}(t)\sim \frac{t}{[\psi_d^*]^{-1}(t)}.
\]
Moreover
\[
[\psi_d^*]^{-1}(t)\sim t^{-1/d}[\psi^*]^{-1}(t),
\]
and hence
\[
\gamma^{-1}(t)\sim \frac{t^{1+1/d}}{[\psi^*]^{-1}(t)}.
\]
Using $[\psi^*]^{-1}(t)\sim \frac{t}{\psi^{-1}(t)}$, it follows that
\[
\gamma^{-1}(t)\sim t^{1/d}\psi^{-1}(t).
\]
In particular, one obtains $\gamma_d\sim \psi$.

We now state the main result of this paper.
\begin{theorem}\label{main theorem}
	Let $d\ge 3$. Let $\Omega \subset \mathbb{R}^d$ be a bounded domain of class $C^{1}$, and let $\mathcal O$ be an open set such that $\Omega \subset \mathcal O$. Let $\psi$ be an $N$-function satisfying $\psi\in \Delta_{2}\cap \nabla_{2}$ and $\mathcal{I}_{\psi,1}^{\infty}<\infty$. Assume that the coefficients satisfy the $\mathcal{H}_{\psi}$-condition on $\mathcal O$. 
	
	If $\mathbf F \in L^{\psi}(\Omega)$ and $g \in L^{\gamma}(\Omega)$, where $\gamma=[\psi_d^*]^*$, then any weak solution $u$ to \eqref{eq.1} in $\Omega$ satisfies
	\begin{equation}\label{est:main}
		\|u\|_{W^{1,\psi}(\Omega)} \le c \Big( \|u\|_{L^1(\Omega)} + \|\mathbf{F}\|_{L^\psi(\Omega)} + \|g\|_{L^\gamma(\Omega)} \Big),
	\end{equation}
	where $c>0$ depends only on $d$, $\Omega$, $m$, $M$, $\omega_{\mathbf A}$, $\psi$, and $\Lambda$, and is independent of $u$, $\mathbf F$, and $g$. Here $\omega_{\mathbf A}$ denotes the modulus of continuity of $\mathbf A$, and $\Lambda$ depends only on the bounds of the lower-order coefficients.
\end{theorem}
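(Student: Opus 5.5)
The estimate \eqref{est:main} will be obtained by a perturbation argument. The lower-order terms are moved to the right-hand side and absorbed into the data, and then a purely second-order Orlicz gradient estimate is applied. The base result I intend to use is the $W^{1,\psi}$ estimate for the principal part. For $\mathbf A$ continuous and uniformly elliptic on $\mathcal O\supset\overline\Omega$, $\Omega$ of class $C^1$, and $\psi\in\Delta_2\cap\nabla_2$, any weak solution $w\in W^{1,2}_0$ of $-\operatorname{div}(\mathbf A\nabla w)=-\operatorname{div}\mathbf G+h$ satisfies
\[
\|w\|_{W^{1,\psi}}\le c\bigl(\|w\|_{L^1}+\|\mathbf G\|_{L^\psi}+\|h\|_{L^\gamma}\bigr).
\]
I would get this by interpolating the Lebesgue $W^{1,q}$ estimates (available for all $1<q<\infty$ by Morrey's theory quoted above) via Boyd indices, which lie strictly inside $(1,\infty)$ since $\psi\in\Delta_2\cap\nabla_2$. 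The $h$ term is written as $\operatorname{div}\nabla\Delta^{-1}h$ on a ball containing $\Omega$, and the Orlicz--Sobolev inequality $\|\nabla\Delta^{-1}h\|_{L^\psi}\lesssim\|h\|_{L^\gamma}$ follows from $\gamma_d\sim\psi$, i.e.\ Cianchi's sharp Orlicz embedding.

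Next I write \eqref{eq.1} as $-\operatorname{div}(\mathbf A\nabla u)=\operatorname{div}(\mathbf B u+\mathbf F)-\mathbf C\cdot\nabla u-\mathbf V u+g$ and bound each new datum by Hölder inequalities in Orlicz spaces. The condition $\mathcal I^\infty_{\psi,1}<\infty$ gives $W^{1,\psi}\hookrightarrow L^\infty$ (indeed into a space of continuous functions), so that
\[
\|\mathbf B u\|_{L^\psi}\le\|\mathbf B\|_{L^\psi}\|u\|_\infty, \qquad \|\mathbf V u\|_{L^\gamma}\le \|\mathbf V\|_{L^\gamma}\|u\|_\infty .
\]
For the drift, $\mathbf C\in L^d$ and $\nabla u\in L^\psi$ give $\mathbf C\cdot\nabla u\in L^{\psi_\#}$, where $\psi_\#^{-1}(t)\sim t^{1/d}\psi^{-1}(t)=\gamma^{-1}(t)$. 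Hence $\|\mathbf C\cdot\nabla u\|_{L^\gamma}\lesssim\|\mathbf C\|_{L^d}\|\nabla u\|_{L^\psi}$. Plugging these bounds into the base estimate gives the right structure, but not with small constants.

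The main obstacle is absorption, since the constants $\|\mathbf B\|_{L^\psi}$, $\|\mathbf C\|_{L^d}$ and $\|\mathbf V\|_{L^\gamma}$ are not small. I would localize. By absolute continuity of the norms (using $\Delta_2$), for any $\varepsilon>0$ there is $r>0$ such that on every ball $B_{2r}$ the local norms of $\mathbf B$, $\mathbf C$ and $\mathbf V$ are below $\varepsilon$. For a cutoff $\eta$ supported in $B_{2r}$, the function $\eta u$ solves an equation of the same type with extra data $\mathbf A\nabla\eta\, u$, $\mathbf A\nabla u\cdot\nabla\eta$, and so on. These involve only $u$ and $\nabla u$ without derivatives of $\eta u$ beyond first order, and are bounded by $c_r(\|u\|_{L^\psi}+\|\nabla u\|_{L^{\psi_\flat}})$, where $\psi_\flat$ is a weaker space. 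With $\varepsilon$ small, the small terms are absorbed. Near $\partial\Omega$ I would flatten the boundary and use the boundary version of the base estimate.

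Summing over a finite cover, the lower-order norms of $u$ on the right are handled by interpolation (Ehrling type: $\|u\|_{W^{1,\psi_\flat}}\le\delta\|u\|_{W^{1,\psi}}+c_\delta\|u\|_{L^1}$, from compactness of $W^{1,\psi}\hookrightarrow W^{1,\psi_\flat}$-type spaces and $L^\infty$). This yields \eqref{est:main}.

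Two points need care. First, a priori $u$ is only in $W^{1,2}_0$, so the absorption must be justified. I would do this by a bootstrap in Lebesgue exponents $2\to q_1\to\cdots$, which shows $u\in W^{1,q}$ for some $q$ close to the lower Boyd index, and then run the argument above, or alternatively approximate the coefficients by smooth ones and pass to the limit. Second, if $\mathcal I^\infty_{\psi,1}<\infty$ makes $\psi$ grow faster than $t^d$, then the $\mathbf C\cdot\nabla u$ term is the delicate one, and this is exactly where $\gamma=[\psi_d^*]^*$ is dictated.
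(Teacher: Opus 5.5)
\textbf{The gap is in the closing interpolation step.} After localizing, $\eta u$ picks up the commutator datum $\nabla\eta\cdot\mathbf A\nabla u$. This term is first order in $u$, and shrinking $r$ does not make it small. By Orlicz--H\"older with $\gamma^{-1}(t)\sim t^{1/d}\psi^{-1}(t)$, its natural $L^\gamma(B_{2r})$ bound is $r^{-1}\|\chi_{B_{2r}}\|_{L^d}\|\nabla u\|_{L^\psi(B_{2r})}\sim\|\nabla u\|_{L^\psi(B_{2r})}$. That is scale invariant and of the same strength as the quantity you are estimating.

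You propose to bound it by $c_r\|\nabla u\|_{L^{\psi_\flat}}$ and then invoke $\|u\|_{W^{1,\psi_\flat}}\le\delta\|u\|_{W^{1,\psi}}+c_\delta\|u\|_{L^1}$. That inequality is false. The embedding $W^{1,\psi}\hookrightarrow W^{1,\psi_\flat}$ is not compact, because its gradient component is the non-compact inclusion $L^\psi\hookrightarrow L^{\psi_\flat}$. Concretely, take $u_n=n^{-1}\sin(nx_1)\phi$ with $0\ne\phi\in C_c^\infty(\Omega)$. Then $\|u_n\|_{L^1}\to0$ and $\sup_n\|u_n\|_{W^{1,\psi}}<\infty$, while $\|\nabla u_n\|_{L^{\psi_\flat}}$ stays bounded away from $0$. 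Control of a norm of $\nabla u$ by $\|u\|_{L^1}$ with a small factor in front of $\|u\|_{W^{1,\psi}}$ has to come from the equation. As written, your estimate does not close. The other commutator terms ($\mathbf A u\nabla\eta$, $u\,\mathbf C\cdot\nabla\eta$, $\nabla\eta\cdot\mathbf B u$, $\nabla\eta\cdot\mathbf F$) are harmless: they involve $u$ only undifferentiated, and $W^{1,\psi}$ embeds compactly into $L^\psi$ and $L^d$.

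\textbf{What is missing is a mechanism that feeds the equation back in.} The paper argues by contradiction. It normalizes $\|\widetilde u_n\|_{W^{1,\psi}}=1$ with $\|\widetilde u_n\|_{L^1}$ and the data tending to $0$, so $\widetilde u_n\rightharpoonup0$. The commutator $\mathbf q^{(2)}_{k,n,r}=\sum_{i,j}\partial_i\chi_k\,\mathbf A^{ij}_r\,\partial_j\widetilde u_{k,n}$ then tends to $0$ weakly in $L^\psi$. Because this term lies in $L^\psi$ and not merely in $L^\gamma$, its image under $\mathcal P_{2r}$ is bounded in $W^{2,\psi}$, hence converges strongly to $0$ in $W^{1,\psi}$. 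Subtracting it and applying the local contraction estimate forces $\chi_k\widetilde u_n\to0$, a contradiction.

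You can graft this onto your scheme in either of two ways.
\begin{enumerate}
\item Do not estimate the commutator by a weak norm of $\nabla u$. Instead, keep it as the divergence datum $\nabla N(\nabla\eta\cdot\mathbf A\nabla u)$, which depends compactly on $u\in W^{1,\psi}$, and run the same contradiction argument.
\item Use the valid interpolation $\|\nabla u\|_{L^\gamma}\le\delta\|\nabla u\|_{L^\psi}+c_\delta\|\nabla u\|_{L^2}$. Bound $\|\nabla u\|_{L^2}$ by the G\r{a}rding energy inequality for $u\in W^{1,2}_0$, splitting $\mathbf B,\mathbf C,\mathbf V$ into a part small in $L^d$ (resp.\ $L^{d/2}$) plus a bounded part. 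Only then apply the genuine Ehrling inequality $\|u\|_{L^2}\le\delta\|u\|_{W^{1,\psi}}+c_\delta\|u\|_{L^1}$.
\end{enumerate}

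The rest of your plan is sound. That includes the principal-part estimate via Boyd interpolation of the Lebesgue $W^{1,q}$ theory, a legitimate alternative to the paper's frozen-coefficient Calder\'on--Zygmund/contraction step. It also includes the Orlicz--H\"older bounds and the use of small local norms. Your concern about a priori regularity is legitimate; the paper sidesteps it by treating the estimate as an a priori bound for $u\in W^{1,\psi}$.
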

The proof of the main result is based on the method developed in \cite{morrey1966multiple}. The local estimates follow from the boundedness of the Calderón--Zygmund and Riesz operators, while the global estimate in $\Omega$ is obtained through a localization procedure. We also point out that the techniques of \cite{byun2008elliptic,byun2008gradient,byun2011gradient}, which strongly depend on the Reifenberg geometry, are not directly applicable in our setting.\\

\noindent The plan of the paper is the following. Section~\ref{sec3} contains the preliminaries on \(N\)-functions, Orlicz spaces, Orlicz--Sobolev spaces, and the embedding results used in the paper. Section~\ref{sec4} is devoted to the proof of Theorem~\ref{main theorem}. We first prove local estimates on balls and half-balls and then derive the corresponding estimate in \(\Omega\). Section~\ref{sec5} presents examples of admissible functions \(\psi\). In Section~\ref{sec6}, we introduce a class \((\mathcal C)\) of Banach spaces \(X\) for which estimate \eqref{est.1} holds. In the case \(X=L^{\psi}\), we use the results of \cite{cianchi1999strong} to identify the associated space \(Y\). Finally, we consider Lorentz spaces and investigate whether they belong to the class \((\mathcal C)\).

\section{Preliminaries}\label{sec3}
Throughout this paper we use the notation and basic facts from \cite{rao1991theory, cianchi2009orlicz, harjulehto2019generalized}. Let \(d \geq 3\), and let \(\Omega\) be an open subset of \(\mathbb{R}^d\). Let \(\psi:[0,\infty)\to[0,\infty)\) be an \textit{\(N\)-function}; namely, \(\psi\) is convex and increasing, \(\psi(0)=0\), and
\[
\lim_{t\to 0^+}\frac{\psi(t)}{t}=0,
\qquad
\lim_{t\to \infty}\frac{\psi(t)}{t}=\infty .
\]
Let \(\psi_1\) and \(\psi_2\) be \(N\)-functions. We write
\[
\psi_1 \prec \psi_2,
\]
if there exists \(c>0\) such that
\[
\psi_1(t)\le \psi_2(ct)
\qquad \text{for all } t>0.
\]
If both \(\psi_1\prec \psi_2\) and \(\psi_2\prec \psi_1\) hold, then we write
\[
\psi_1\sim \psi_2.
\]
We say that \(\psi\) satisfies the \textit{\(\Delta_2\)-condition} if there exists \(\ell>0\) such that
\[
\psi(2t)\le \ell\,\psi(t)
\qquad \text{for all } t\ge0.
\]
The \textit{conjugate function} \(\psi^*\) of \(\psi\) is defined by
\[
\psi^*(s)=\sup_{t\ge0}\bigl\{st-\psi(t)\bigr\},
\qquad s\ge0.
\]
The function \(\psi^*\) is again an \(N\)-function. We say that \(\psi\) satisfies the \textit{\(\nabla_2\)-condition} if \(\psi^*\) satisfies the \(\Delta_2\)-condition.

For example, the pair of complementary \(N\)-functions
\[
\psi(t)=(1+t)\log(1+t)-t,
\qquad 
\psi^*(s)=e^s-s-1,
\]
provides a typical situation where the \(\Delta_2\)- and \(\nabla_2\)-conditions are not simultaneously satisfied, since
\[
\psi\in \Delta_2\setminus \nabla_2,
\qquad 
\psi^*\in \nabla_2\setminus \Delta_2.
\]

Assume that \(\psi\) satisfies the \(\Delta_2\cap\nabla_2\)-conditions . Then there exist constants \(1<p_\psi^-\le p_\psi^+<\infty\) and \(L\ge1\) such that for every \(\alpha>1\) and every \(t>0\),
\begin{equation}\label{eq.HQ}
	L^{-1}\alpha^{p_\psi^-}\psi(t)\le \psi(\alpha t)\le L\alpha ^{p_\psi^+}\psi(t).
\end{equation}
Here \(p_\psi^-\) and \(p_\psi^+\) denote the Matuszewska--Orlicz indices of \(\psi\). We denote by \(\psi^{-1}\) the left inverse of \(\psi\), defined by \(\psi^{-1}(s):=\inf\{t\ge0:\psi(t)>s\}\). Since \(\psi\) is convex and satisfies \((\mathrm{aDec})_{p_\psi^+}\), that is, there exists a constant \(L\ge1\) such that \(\psi(t)/t^{p_\psi^+}\le L\,\psi(s)/s^{p_\psi^+}\) for all \(0<s<t\), it follows from \cite[Corollary~2.3.4, p.~25]{harjulehto2019generalized} that \(\psi\) is bijective, \(\psi^{-1}\) coincides with the usual inverse function, and \(\psi^{-1}(t)\,[\psi^*]^{-1}(t)\sim t\).

We denote by \(L^\psi(\Omega)\) the Orlicz space of all measurable functions \(u:\Omega\to\mathbb R\) such that
\[
\int_{\Omega} \psi\left(\frac{|u(x)|}{\lambda}\right)\,dx<\infty
\qquad \text{for some } \lambda>0.
\]
Equipped with the \emph{Luxemburg norm} \cite{luxemburg1955banach}
\[
\|u\|_{L^\psi(\Omega)}:=\inf\left\{\lambda>0:\int_{\Omega} \psi\left(\frac{|u(x)|}{\lambda}\right)\,dx\le 1\right\}.
\]
Then \(L^\psi(\Omega)\) is a Banach space. If \(\psi\) satisfies  \(\Delta_2\)-condition, then \(L^\psi(\Omega)\) coincides with the Orlicz class 
$$K^\psi(\Omega)
= \left\{\, u \,: \
\int_{\Omega} \psi(|u(x)|)\,dx < \infty \right\}.$$
If \(\psi\) satisfies the \(\Delta_2\cap \nabla_2\)-condition, then
\[
(L^\psi(\Omega))^*=L^{\psi^*}(\Omega),
\]
and \(L^\psi(\Omega)\) is reflexive.

Let \(m\in \mathbb{N}\). The Orlicz--Sobolev space \(W^{m,\psi}(\Omega)\) consists of all functions \(u\in L^\psi(\Omega)\) such that
\[
D^\alpha u\in L^\psi(\Omega)
\]
for every multi-index \(\alpha\) satisfying \(|\alpha|\le m\). Here, for
\[
\alpha=(\alpha_1,\dots,\alpha_d),
\]
we set
\[
|\alpha|=\alpha_1+\cdots+\alpha_d,
\]
and
\[
D^\alpha u
=
\frac{\partial^{|\alpha|}u}
{\partial x_1^{\alpha_1}\cdots \partial x_d^{\alpha_d}}.
\]
The space \(W^{m,\psi}(\Omega)\) is a Banach space when equipped with the norm
\[
\|u\|_{W^{m,\psi}(\Omega)}
:=
\sum_{|\alpha|\le m}
\|D^\alpha u\|_{L^\psi(\Omega)}.
\]

Moreover if \(\psi\) satisfies the \(\Delta_2\cap\nabla_2\)-condition, then \(W^{m,\psi}(\Omega)\) is reflexive. Further details can be found in \cite{harjulehto2019generalized}.

Let \(\psi\) be an \(N\)-function satisfying  \(\Delta_2\cap\nabla_2\)-condition and
\begin{equation}\label{eq:32}
	\mathcal I_{\psi,1}^0:=\int_{0}^s \left(\frac{t}{\psi(t)}\right)^{1/(d-1)} dt < \infty .
\end{equation}
Define \(H:[0,\infty)\to[0,\infty)\) by $H(s)=\left[\mathcal I_{\psi,1}^0\right]^{(d-1)/d},$ and set \(\psi_d(t):=\psi\bigl(H^{-1}(t)\bigr)\) for \(t\ge0\), where \(H^{-1}\) denotes the generalized left inverse of \(H\). Then by \cite[Theorem~3.1%, p.~86 
]{cianchi2009orlicz}, there exists a constant \(c>0\) such that \(\|u\|_{L^{\psi_d}(\Omega)}\le c\,\|u\|_{W^{1,\psi}(\Omega)}\) for all \(u\in W^{1,\psi}(\Omega)\). Moreover \(L^{\psi_d}(\Omega)\) is optimal among Orlicz target spaces for this embedding, in the sense that \(W^{1,\psi}(\Omega)\hookrightarrow L^\phi(\Omega)\) implies \(L^{\psi_d}(\Omega)\hookrightarrow L^\phi(\Omega)\) for every Young function \(\phi\).\\

 A function $\varpi:[0,\infty)\to[0,\infty)$ is called a modulus of continuity if it is increasing and vanishes
at 0.  We denote by $C^\varpi(\Omega)$ the space of all uniformly continuous functions $u$ on $\Omega$ whose modulus of continuity is bounded by $\varpi$.
The space $C^\varpi(\Omega)$ is endowed with the norm
\[
\|u\|_{C^\varpi(\Omega)}
:=
\|u\|_{C(\Omega)}
+
\sup_{x\ne y}\frac{|u(x)-u(y)|}{\varpi(|x-y|)},
\]
where \(\|u\|_{C(\Omega)}=\sup_{x\in \Omega}|u(x)|\).

 Typical examples are given by \(\varpi(r)=r^\alpha\) with \(0<\alpha\le1\), which yields the Hölder space \(C^{0,\alpha}(\Omega)\), and by \(\varpi(r)=r\), which yields the space of Lipschitz continuous functions. Define \(\Theta_{\psi}:(0,\infty)\to[0,\infty]\) by
\[
\Theta_{\psi}(t):= t^{d/(d-1)}\int_t^{\infty}\frac{\psi^*(s)}{s^{1+d/(d-1)}}\,ds,
\qquad t>0.
\]
Assume that
\begin{equation}\label{eq:44}
	\mathcal I_{\psi,1}^\infty:=\int^\infty \left(\frac{t}{\psi(t)}\right)^{1/(d-1)}\,dt<\infty.
\end{equation}
Then \(\varpi_\psi(r):=r^{1-d}/\Theta_{\psi}^{-1}(r^{-d})\) is a modulus of continuity, and
$
W^{1,\psi}(\Omega)\hookrightarrow C^{\varpi_\psi}(\Omega);
$
see \cite[Theorem~4.2, p.~93]{cianchi2009orlicz}.
\vspace{0.3 cm}

Typical examples of \(\psi\) are presented in Table \ref{tab:psi}.

\begin{small}
\begin{table}[h]\label{Table1}
	\centering
	\renewcommand{\arraystretch}{1.3}
	\begin{small}
	\begin{tabular}{|l|l|l|l|l}
		\hline
		$\psi(t)$ 
		& $t^q,\, q>1$ 
		&  $\sim_{\infty} t^{q}(\log t)^{\alpha},\, q>1, \, \alpha \in\RR$ 
		& $\sim_{\infty} t^{q}(\log\log t)^{\alpha},\, q>1, \, \alpha \in\RR$ \\
		\hline
		$\psi^*(t)$
		& $t^{q/(q-1)}$
		&  $\sim_\infty t^{q/(q-1)} (\log  t)^{-\alpha/(q-1)}$  
		& $\sim_\infty t^{q/(q-1)} (\log \log  t)^{-\alpha/(q-1)}$ \\
		\hline
		\\
		$\psi_d(t)$ 
		& $ t^{d q/(d-q)},\, q< d$
		&  $\sim_\infty 
		t^{dq/(d-q)}(\log t)^{\alpha d /(d-q)},\, q<d$
		& $\sim_\infty 
		t^{qd/(d-q)}(\log\log t)^{\alpha d /(d-q)}, \, q<d$ \\
		& 
		&  $\sim_\infty 
		\exp(\,t^{d/(d-1-\alpha)}),\, q=d,\, \alpha< d-1$
		& $ \sim_{\infty}
		\exp(\,t^{d/(d-1) (\log t)^{\alpha/(d-1)}}),\, q=d$ \\
		& 
		& $\sim_\infty \exp\!\big(\exp(t^{d/(d-1)})\big),\, q=d,\, \alpha=d-1$
		& \\
		\\
		\hline 
		\\
		$\varpi_{\psi}(t)$ 
		& $t^{1-d/q},\, q>d $
		&  $\sim_\infty t^{1-d/q} (-\log t )^{-\alpha/q}, \, q>d$
		& $\sim_\infty  
		t^{\,1-d/q}
		\big(\log(-\log t)\big)^{-\alpha/q},\, q>d$ \\
		&  &$\sim_\infty (- \log t)^{-(\alpha-(d-1))/d},\, q=d,\, \alpha>d-1$ & \\
		\hline  
	\end{tabular}
\end{small}
	\vspace{0.15cm}
	\caption{Typical examples of \(\psi\) and the associated functions \(\psi^*\), \(\psi_d\), and \(\varpi_\psi\).}
	\label{tab:psi} 
\end{table}
\end{small}

\section{Proof of Theorem \ref{main theorem}}\label{sec4}
%The proof begins with a localization argument near an arbitrary point \(x_{0}\in \overline \Omega \). 
%After translation in the interior case, the local domain is reduced to a ball \(B_{2r}\). Near the boundary, a \(C^{1}\)-chart flattens \(\partial G\) and reduces the geometry to a half-ball \(B^{+}_{2r}\) with flat boundary $\sigma_{2R}$. It therefore suffices to work on a local domain $G=B_{2r}$ or $G=B^{+}_{2r}$, with $R$ chosen small enough.\\
%For $x = (x_1, \dots, x_{d-1},x_d)= (x', x_d) \in \mathbb{R}^d$, we write $B_r(x) = \{y \in \mathbb{R}^d : |x-y| < r\}$, $B_r = B_r(0)$, $B^+_r =  B_r \cap \{x_d > 0\}$, and $T_r = B_r \cap \{x_d = 0\}$.
%Let $\eta$ be a non increasing $C^{\infty}$ cut-off function on $\RR$ such that $\eta(s)=1$ for $s\le \frac54$ and $\eta(s)=0$ for $s\ge \frac74$. 

The proof begins with a localization argument in a neighborhood of an arbitrary point 
\(x_{0}\in \overline{\Omega}\). For 
\(x = (x_1, \dots, x_{d-1}, x_d)= (x', x_d) \in \mathbb{R}^d\), we denote by
$$
B_r(x) = \{y \in \mathbb{R}^d : |x-y| < r\}
$$
with \(B_r := B_r(0)\). We also denote by
$$
B_r^{+} := B_r \cap \{x_d > 0\},
\qquad
T_r: = B_r \cap \{x_d = 0\}.
$$
Let $\eta \in C^\infty(\mathbb{R}_{+})$ be a non-increasing cut-off function such that
$$
\eta(s)=
\left\{
\begin{array}{ll}
	1, & \text{for } s \leq \frac{5}{4},\\[0.3em]
	0, & \text{for } s \geq \frac{7}{4},
\end{array}
\right.$$
and $$ \eta_r(x)=\eta (|x|/r),
\quad x\in \Omega.
$$
The modified coefficients are defined by
$$
\mathbf{A}_r^{ij}(x)
=
\bigl(1-\eta_r(x)\bigr)\mathbf{A}_0^{ij}
+
\eta_r(x)\mathbf{A}^{ij}(x),
\qquad
\mathbf{A}_0=\mathbf{A}(0),
$$
and
$$
\mathbf{B}_r^i(x)=\eta_r(x)\mathbf{B}^i(x),
\qquad
\mathbf{C}_r^i(x)=\eta_r(x)\mathbf{C}^i(x),
\qquad
\mathbf{V}_r(x)=\eta_r(x)\mathbf{V}(x).
$$
Assume that $u$ is a weak solution to \eqref{eq.1} in $\Omega$, with
$$
\operatorname{supp} u \subset B_r
\quad \text{or} \quad
\operatorname{supp} u \subset B_r^+ \cup T_r .
$$
Then $u$ satisfies
\begin{equation}\label{form.1}
	\int_{\Omega} \left\{ \sum_{i=1}^d \partial_{i}v \left( \sum_{j=1}^d\mathbf{A}_r^{ij} \partial_{j}u + \mathbf{B}_r^{i} u + \mathbf{F}^{i} \right) + v \left( \sum_{i=1}^d\mathbf{C}_r^{i} \partial_{i}u + \mathbf{V}_r u + g \right) \right\} dx = 0. 
\end{equation}
Since $\mathbf{A}^{i j}_r= \mathbf{A}^{i j}_0+(\mathbf{A}^{i j}_r-\mathbf{A}^{i j}_0)$, we then rewrite \ref{form.1} in the form
\begin{align*}
	\sum_{i,j=1}^d\int_{\Omega} \partial_{i}v\, \mathbf{A}^{i j}_0\, \partial_{j} u\, dx
	&= - \sum_{i=1}^d \int_{\Omega} \partial_{i}v\left(\sum_{j=1}^d (\mathbf{A}^{i j}_r -\mathbf{A}^{i j}_0)\, \partial_{j} u
	+ \mathbf{B}_r^{i}\, u + \mathbf{F}^{i} \right)\, dx\\
	& \qquad \qquad \qquad \qquad \qquad \qquad - \int_{\Omega} v\left( \sum_{i=1}^d\mathbf{C}_r\, \partial_{i}u +  \mathbf{V}_r\, u + g \right)\, dx.
\end{align*}
In other words, $u$ is a weak solution to
\begin{align*}
	- \operatorname{div}(\mathbf{A}_0 \nabla u)
	&= \operatorname{div}\bigl[(\mathbf{A}_r - \mathbf{A}_0)\nabla u + \mathbf{B}_r u + \mathbf{F}\bigr]
	- \bigl(\mathbf{C}_r \cdot \nabla u + \mathbf{V}_r u + g\bigr)
	\quad \text{in } \Omega \\
	&:= \operatorname{div}[\mathbf{G}] - \mathbf{h}.
\end{align*}

Let $\mathcal Q_{2r}[\mathbf{G}]$ and $\mathcal P_{2r}[\mathbf{h}]$ denote, respectively, the quasi-potential and the potential associated with the operator $-\operatorname{div}(\mathbf{A}_0\nabla\cdot)$ in $\Omega$. They are defined as the unique functions in $W^{1,\psi}_{0}(\Omega)$ satisfying
\[
-\operatorname{div}(\mathbf A_0\nabla \mathcal Q_{2r}[\mathbf{G}])=\operatorname{div}(\mathbf{G})
\]
and
\[
-\operatorname{div}(\mathbf A_0\nabla \mathcal P_{2r}[\mathbf{h}])=\mathbf{h}.
\]
We define
\[
u_r:=\mathcal Q_{2r}[\mathbf{G}]-\mathcal P_{2r}[\mathbf{h}].
\]
By construction, $u_r$ satisfies
\[
-\operatorname{div}(\mathbf A_0\nabla u_r)
= \operatorname{div}[\mathbf{G}] - \mathbf{h}
\quad \text{in } \Omega,
\]
so the difference $\mathbf{H}_r := u - u_r$ solves
\[
-\operatorname{div}(\mathbf A_0\nabla \mathbf{H}_r)=0
\quad \text{in } \Omega.
\]
Since $d>2$, we have
\[
\mathcal Q_{2r}[\mathbf{G}](x), \,
\mathcal P_{2r}[\mathbf{h}](x) \to 0
\quad \text{as } |x|\to\infty,
\]
and, together with $\operatorname{supp} u \subset B_r$, this implies that
$\mathbf{H}_r \equiv 0$. Therefore
\begin{equation}\label{eq:local-TR}
	\begin{aligned}
		u=u_r
		&= \mathcal Q_{2r}\Big[(\mathbf{A}_r - \mathbf A_0)\cdot \nabla u_r + \mathbf{B}_r u_r\Big]
		- \mathcal P_{2r}\Big[\mathbf{C}_r \cdot \nabla u_r + \mathbf{V}_r u_r\Big] \\
		&\quad + \mathcal Q_{2r}[\mathbf{F}]
		- \mathcal P_{2r}[g] \\
		&:= \mathcal K_r u_r + \mathbf z_r .
	\end{aligned}
\end{equation}

\begin{proposition}\label{prop:operators}
	Let $\psi$ be an $N$-function satisfying the $\Delta_2 \cap \nabla_2$ condition with $\gamma=[\psi_d^*]^*$, and let $r>0$. Then
	\begin{enumerate}
		\item [(1)] The operator $\mathcal{Q}_{2r}$ is bounded from $L^\psi(B_{2r})$ into $W^{1,\psi}(B_{2r})$.
		
		\item [(2)] If $\mathcal I^0_{\gamma,1}<\infty$, then $\mathcal{P}_{2r}$ is bounded from $L^\gamma(B_{2r})$ into $W^{1,\psi}(B_{2r})$.
	\end{enumerate}
	Moreover the corresponding bounds are independent of $r$.
\end{proposition}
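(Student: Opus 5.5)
Since $\mathbf A_0$ is a constant elliptic matrix, I will write $\mathcal Q_{2r}$ and $\mathcal P_{2r}$ through the fundamental solution $\Gamma_0$ of $-\operatorname{div}(\mathbf A_0\nabla\cdot)$ on $\mathbb R^d$, applied to data extended by zero outside $B_{2r}$:
\[
\mathcal Q_{2r}[\mathbf G]=-\int_{B_{2r}}\nabla_y\Gamma_0(x-y)\cdot\mathbf G(y)\,dy,\qquad
\mathcal P_{2r}[\mathbf h]=\int_{B_{2r}}\Gamma_0(x-y)\,\mathbf h(y)\,dy .
\]
The homogeneity $|\nabla^k\Gamma_0(z)|\lesssim |z|^{2-d-k}$ then gives
\[
\nabla\mathcal Q_{2r}[\mathbf G]=T\mathbf G+c_0\mathbf G ,
\]
where $T$ is a Calderón--Zygmund singular integral with kernel $\nabla^2\Gamma_0$. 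It also gives $|\nabla\mathcal P_{2r}[\mathbf h]|\lesssim I_1|\mathbf h|$ and $|\mathcal Q_{2r}[\mathbf G]|\lesssim I_1|\mathbf G|$, with $I_1$ the Riesz potential of order one. The plan is then to verify boundedness of each operator on the Orlicz scale, using only the indices and embeddings recalled in Section~\ref{sec3}.

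\emph{Part (1).} Because $\psi\in\Delta_2\cap\nabla_2$, the inequalities \eqref{eq.HQ} give $1<p_\psi^-\le p_\psi^+<\infty$. Boyd interpolation (or Marcinkiewicz interpolation in Orlicz form between $L^{p_0}$ and $L^{p_1}$ with $1<p_0<p_\psi^-$ and $p_\psi^+<p_1<\infty$) then shows that $T$ is bounded on $L^\psi(\mathbb R^d)$. This yields $\|\nabla\mathcal Q_{2r}\mathbf G\|_{L^\psi}\le c\|\mathbf G\|_{L^\psi}$, and the constant does not depend on $r$ because $T$ is translation and dilation invariant. For the zero-order part I will use $|\mathcal Q_{2r}\mathbf G|\lesssim I_1(|\mathbf G|\chi_{B_{2r}})$ and the pointwise inequality $I_1 f\lesssim r\, Mf$ on $B_{2r}$ for $f$ supported in $B_{2r}$. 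Boundedness of the maximal function on $L^\psi$ (by $\nabla_2$) then gives $\|\mathcal Q_{2r}\mathbf G\|_{L^\psi(B_{2r})}\lesssim r\|\mathbf G\|_{L^\psi}$, which is uniform for $r\le 1$. Only small $r$ is used in the localization, so I would state the uniformity for $r\le r_0$.

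\emph{Part (2).} The key inequality is $\|I_1 f\|_{L^\psi}\lesssim\|f\|_{L^\gamma}$. Cianchi's sharp Orlicz bound for Riesz potentials says that $I_1:L^{A}\to L^{A_d}$, where $A_d$ is exactly the Sobolev conjugate constructed in Section~\ref{sec3}, under the finiteness condition $\mathcal I^0_{A,1}<\infty$. Applying this with $A=\gamma$ and using the relation $\gamma_d\sim\psi$ proved in the introduction gives
\[
\|\nabla\mathcal P_{2r}\mathbf h\|_{L^\psi}\le c\|\mathbf h\|_{L^\gamma},
\]
and this is where the hypothesis $\mathcal I^0_{\gamma,1}<\infty$ enters. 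The estimate is scale invariant, since $\gamma\mapsto\gamma_d$ is the critical pair for $I_1$ and the inequality holds in Luxemburg form on all of $\mathbb R^d$; hence the constant is independent of $r$. For $\mathcal P_{2r}\mathbf h$ itself, $|\Gamma_0*\mathbf h|\lesssim I_2|\mathbf h|\lesssim r\,I_1|\mathbf h|$ on $B_{2r}$, which reduces to the same bound.

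The main obstacle is the justification of $\gamma_d\sim\psi$ as genuine $N$-function equivalence, and the availability of the Orlicz bound for $I_1$ with a constant depending only on $\gamma$ and $d$. The introduction only gives the equivalence of inverses up to constants, $\gamma^{-1}(t)\sim t^{1/d}\psi^{-1}(t)$. To turn this into $\gamma_d\sim\psi$, which is needed in the Luxemburg norm, I will combine the $\Delta_2$ property (to absorb multiplicative constants inside $\psi$) with the standard computation $\gamma_d^{-1}(t)\sim t^{-1/d}\gamma^{-1}(t)$ for functions of positive lower index.
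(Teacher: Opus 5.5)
Your proof is correct in substance, but it takes a different route from the paper in part (2) and in the zero-order term of part (1).

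\textbf{Part (1).} For the gradient of $\mathcal Q_{2r}$ the paper argues as you do, via the kernel $\partial_{ij}\widetilde{\mathbf k}_0$ and Lemma~\ref{lem:1}. For $\mathcal Q_{2r}[\mathbf G]$ itself, however, it uses Poincar\'e's inequality and asserts $\mathcal Q_{2r}[\mathbf G]\in W^{1,\psi}_0(B_{2r})$. That assertion is at odds with the whole-space kernel representation the paper uses, since a Newtonian potential does not vanish on $\partial B_{2r}$. Your bound $I_1f\lesssim r\,Mf$ on $B_{2r}$, together with $\psi\in\nabla_2$, avoids this. With the paper's scaled norm $\|\nabla\cdot\|_{L^\psi(B_{2r})}+(2r)^{-1}\|\cdot\|_{L^\psi(B_{2r})}$ it is even uniform for every $r>0$.

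\textbf{Part (2).} The paper treats $\mathcal P_{2r}$ as the Dirichlet solution in $B_{2r}$. It uses the Orlicz Calder\'on--Zygmund estimate $\|\nabla^2\mathcal P_{2r}\mathbf h\|_{L^\gamma}\lesssim\|\mathbf h\|_{L^\gamma}$ (Lemma~\ref{lem:2}) and then applies the embedding $W^{1,\gamma}\hookrightarrow L^{\gamma_d}\sim L^\psi$ to $\nabla\mathcal P_{2r}\mathbf h$. You instead dominate $|\nabla\mathcal P_{2r}\mathbf h|$ pointwise by $I_1(|\mathbf h|\chi_{B_{2r}})$ and use a Riesz-potential bound on $\RR^d$. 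Your whole-space definition of both operators also matches how they are used in the decomposition, where $\mathbf H_r\equiv0$ follows from decay at infinity.

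\textbf{What each route buys.} Your route needs no second derivatives. It gives independence of $r$ immediately, because a whole-space inequality is applied to data supported in $B_{2r}$. No ``scale invariance'' is involved, and Luxemburg norms are not dilation invariant in any case. The paper's constants on $B_{2r}$ would need a scaling argument it does not supply. On the other hand, the paper's route only needs the Orlicz--Sobolev embedding, which holds under $\mathcal I^0_{\gamma,1}<\infty$ alone.

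\textbf{The point to fix.} The strong-type bound $I_1\colon L^A\to L^{A_d}$ does not follow from $\mathcal I^0_{A,1}<\infty$ alone. Take $A(t)=t$: then $A_d(t)=t^{d/(d-1)}$, but $I_1$ does not map $L^1$ into $L^{d/(d-1)}$. Cianchi's strong-type criterion contains an additional dual condition. Two fixes are available:
\begin{enumerate}
\item[(i)] The bound holds if $\gamma\in\nabla_2$, by O'Neil's inequality plus Hardy's inequality. The paper tacitly needs this as well when it applies Lemma~\ref{lem:2} to $\gamma$.
\item[(ii)] Alternatively, $I_1\colon L^{\psi^*}\to L^{\psi_d^*}=L^{\gamma^*}$ holds because $\psi^*\in\nabla_2$; here $\psi_d^*$ denotes the Sobolev conjugate of $\psi^*$. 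Duality then gives $I_1\colon L^\gamma\to L^\psi$ directly, which bypasses the $\gamma_d\sim\psi$ step you identified as the main obstacle.
\end{enumerate}
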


\vspace{0.2cm}

The following lemmas will be needed in the proof below.

\vspace{0.2cm}

\begin{lemma}\label{lem:1}
	Let $\psi$ be an $N$-function satisfying the $\Delta_2 \cap \nabla_2$ condition, and let $\mathcal{S}$ be a Calderón--Zygmund singular integral operator. Then
	\[
	\|\mathcal{S}f\|_{L^{\psi}(\RR^d)} \lesssim \|f\|_{L^{\psi}(\RR^d)}.
	\]
\end{lemma}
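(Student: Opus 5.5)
Lemma~\ref{lem:1} follows by interpolation from the classical Lebesgue theory. A Calder\'on--Zygmund operator $\mathcal S$ is bounded on $L^p(\RR^d)$ for every $1<p<\infty$. By the $\Delta_2\cap\nabla_2$ assumption, the Matuszewska--Orlicz indices satisfy $1<p_\psi^-\le p_\psi^+<\infty$, as in \eqref{eq.HQ}. So we can fix exponents $1<p_0<p_\psi^-\le p_\psi^+<p_1<\infty$, and $\mathcal S$ is bounded on both $L^{p_0}$ and $L^{p_1}$. It is moreover of weak types $(p_0,p_0)$ and $(p_1,p_1)$. The plan is to transfer this to $L^\psi$ through a modular inequality
\[
\int_{\RR^d}\psi(|\mathcal Sf|)\,dx\le C\int_{\RR^d}\psi(C|f|)\,dx ,
\]
and then pass to the Luxemburg norm.

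\textbf{Step 1 (splitting at each level).} Fix $\lambda>0$ and split $f=f_1+f_2$ with $f_1=f\chi_{\{|f|>\lambda\}}$. Using the weak-type bounds,
\[
|\{|\mathcal Sf|>2\lambda\}|\le C\lambda^{-p_0}\int_{\{|f|>\lambda\}}|f|^{p_0}\,dx+C\lambda^{-p_1}\int_{\{|f|\le\lambda\}}|f|^{p_1}\,dx .
\]

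\textbf{Step 2 (integration against $\psi$).} Write $\int\psi(|\mathcal Sf|)=\int_0^\infty\psi'(s)\,|\{|\mathcal Sf|>s\}|\,ds$, insert the bound from Step 1, and apply Fubini. This reduces the claim to the two pointwise estimates
\[
\int_0^{t}\psi'(2\lambda)\lambda^{-p_0}\,d\lambda\lesssim \frac{\psi(t)}{t^{p_0}},\qquad \int_t^\infty\psi'(2\lambda)\lambda^{-p_1}\,d\lambda\lesssim\frac{\psi(t)}{t^{p_1}} .
\]
I will use $\psi'(s)\sim\psi(s)/s$, which holds for $\Delta_2$ convex functions. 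With it, the two estimates are exactly the almost-increasing property of $\psi(t)/t^{p_0}$ and the almost-decreasing property of $\psi(t)/t^{p_1}$. Both follow from \eqref{eq.HQ} because $p_0<p_\psi^-$ and $p_1>p_\psi^+$: for instance, $\psi(\lambda)\le L(\lambda/t)^{p_\psi^-}\psi(t)$ for $\lambda<t$, and this makes the first integral converge to a multiple of $\psi(t)t^{-p_0}$. Combining with the $\Delta_2$ condition gives the modular inequality above.

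\textbf{Step 3 (from modular to norm).} Apply the modular inequality to $f/\|f\|_{L^\psi}$. Then by \eqref{eq.HQ},
\[
\int\psi\bigl(|\mathcal Sf|/\|f\|_{L^\psi}\bigr)\,dx\le C' .
\]
Since $\psi(t/K)\le \psi(t)/K$ for $K\ge1$ by convexity, we get $\|\mathcal Sf\|_{L^\psi}\le C''\|f\|_{L^\psi}$. The constant depends only on $d$, $\psi$ (through $L$ and $p_\psi^\pm$) and the Calder\'on--Zygmund constants.

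The main obstacle is the bookkeeping in Step~2. Exponents $p_0,p_1$ lying strictly outside $[p_\psi^-,p_\psi^+]$ are needed for the $\lambda$-integrals to converge; I choose them that way from the start. I also need to handle $\psi'$, which exists a.e.\ since $\psi$ is convex, and the comparability constants uniformly. A shortcut, if allowed, would be to quote the Boyd interpolation theorem, since $L^\psi$ is rearrangement invariant with Boyd indices in $(1,\infty)$ under $\Delta_2\cap\nabla_2$. Alternatively, Kokilashvili--Krbec type modular interpolation gives the same result; the argument above is just its direct proof.
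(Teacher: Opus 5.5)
Your proof is correct, but it takes a different route from the paper. The paper's proof is a one-line citation of \cite[Corollary~5.4.3]{harjulehto2019generalized}, a theorem for generalized (Musielak--)Orlicz spaces under the structural conditions (A0)--(A2), $(\mathrm{aInc})_p$ and $(\mathrm{aDec})_q$. For an $x$-independent $\psi$ the conditions (A0)--(A2) hold automatically, and $\Delta_2\cap\nabla_2$ supplies $(\mathrm{aInc})_{p_\psi^-}$ and $(\mathrm{aDec})_{p_\psi^+}$. As far as I recall, that book obtains the corollary by Rubio de Francia extrapolation, so it rests on weighted $L^{p}(w)$ bounds for Calder\'on--Zygmund operators and on boundedness of the maximal operator on $L^\psi$ and its dual.

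You instead prove a modular Marcinkiewicz interpolation theorem by hand. You use only unweighted weak-type $(p_0,p_0)$ and $(p_1,p_1)$ bounds at exponents strictly bracketing the Matuszewska--Orlicz indices, and \eqref{eq.HQ} makes the two $\lambda$-integrals converge. What your route buys:
\begin{itemize}
\item it is elementary and self-contained;
\item it makes the constant's dependence on $L$, $p_\psi^\pm$ and the $L^{p_i}$ bounds explicit;
\item it gives the stronger modular inequality $\int\psi(|\mathcal Sf|)\,dx\le C\int\psi(|f|)\,dx$;
\item it applies verbatim to any sublinear operator of those two weak types.
\end{itemize}
The cited result's extra generality ($x$-dependent Young functions, e.g.\ variable exponent or double phase) is never used in the paper.

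Two small remarks. First, you should state how $\mathcal Sf$ is defined for $f\in L^\psi$. The same almost-monotonicity gives $f\chi_{\{|f|>\lambda\}}\in L^{p_0}$ and $f\chi_{\{|f|\le\lambda\}}\in L^{p_1}$, i.e.\ $L^\psi\subset L^{p_0}+L^{p_1}$, which is exactly what your Step~1 needs. Second, your Boyd-index shortcut is the viewpoint that carries over to the rearrangement-invariant class $(\mathcal C)$ of Section~\ref{sec6}, where $(\mathcal C\text{-}i)$ is precisely Boyd's hypothesis. The generalized-Orlicz citation does not reach, for instance, Lorentz spaces.
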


\begin{proof}
	Since $\psi$ is independent of $x$, conditions $(\mathrm{A}0)$, $(\mathrm{A}1)$, and $(\mathrm{A}2)$ are satisfied. Moreover the assumption $\psi\in\Delta_2\cap\nabla_2$ implies $(\mathrm{aInc})_{p_\psi^-}$ and $(\mathrm{aDec})_{p_\psi^+}$. Therefore the result follows from \cite[Corollary 5.4.3, p.~116]{harjulehto2019generalized}.
\end{proof}

\begin{lemma}\label{lem:2}
	Let $\psi$ be an $N$-function satisfying the $\Delta_2 \cap \nabla_2$ condition, and let $f\in L^\psi(B_{2r})$. Then the Dirichlet problem $\Delta w=f$ admits a unique strong solution
	\[
	w\in W^{2,\psi}(B_{2r})\cap W^{1,\psi}_0(B_{2r})
	\]
	such that
	\[
	\|w\|_{W^{2,\psi}(B_{2r})} \lesssim \|f\|_{L^\psi(B_{2r})}.
	\]
\end{lemma}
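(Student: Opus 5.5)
We prove Lemma~\ref{lem:2} by the classical route for $W^{2,p}$-estimates on a ball: existence via $L^2$ theory, interior control of second derivatives by Calder\'on--Zygmund operators (Lemma~\ref{lem:1}), and boundary control by reflection or flattening. By translation and dilation it suffices to work on $B:=B_{2r}$. The constant will be allowed to depend on $r$. The scaling $w\mapsto w(2r\,\cdot)$ is not homogeneous in Orlicz norms, so we simply fix $r$ and do not track that dependence.

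\emph{Step 1 (existence and uniqueness).} Since $\psi\in\nabla_2$, we have $p_\psi^->1$. Hence $L^\psi(B)\hookrightarrow L^{p}(B)$ for every $p\le p_\psi^-$, because $B$ is bounded. Fix such a $p$ with $1<p\le 2$. Classical $L^p$ theory for the Dirichlet Laplacian on a ball gives a unique $w\in W^{2,p}(B)\cap W^{1,p}_0(B)$ with $\Delta w=f$. Uniqueness in the smaller class $W^{2,\psi}\cap W^{1,\psi}_0\subset W^{2,p}\cap W^{1,p}_0$ follows at once. It remains to show $D^2w\in L^\psi(B)$ with the stated bound. The lower-order terms are then handled by Poincar\'e-type inequalities in $W^{1,\psi}_0$, or by $\|w\|_{W^{1,\psi}}\lesssim\|D^2 w\|_{L^\psi}$ combined with compactness. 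Alternatively, by the representation used below, $w$ and $\nabla w$ are given by Riesz and Newtonian potentials of $f$, which are bounded on $L^\psi(B)$ because $B$ is bounded.

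\emph{Step 2 (representation and second derivatives).} Write $w=N f + h$. Here $Nf=\Gamma*(f\chi_B)$ is the Newtonian potential, and $h$ is harmonic in $B$ with $h=-Nf$ on $\partial B$. We have $\partial_{ij}Nf=\mathcal S_{ij}(f\chi_B)+c_{ij}f\chi_B$, where $\mathcal S_{ij}$ is a Calder\'on--Zygmund singular integral. Lemma~\ref{lem:1} then gives $\|D^2Nf\|_{L^\psi(B)}\lesssim\|f\|_{L^\psi(B)}$.

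\emph{Step 3 (the harmonic correction).} This is the main obstacle: we must control $D^2h$ up to $\partial B$ in $L^\psi$. Two approaches are available.

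\begin{itemize}
\item \textbf{Kelvin-type reflection.} Because the domain is a ball, the Green function is explicit: $G(x,y)=\Gamma(x-y)-\Gamma\bigl(|y|/(2r)\,(x-y^*)\bigr)$, with $y^*$ the inversion of $y$. Differentiating the reflected part twice yields a kernel that, after the change of variables $y\mapsto y^*$, is again of Calder\'on--Zygmund type, or is dominated by the Hardy--Littlewood maximal function. Both are bounded on $L^\psi$ under $\Delta_2\cap\nabla_2$, by \cite{harjulehto2019generalized}.

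\item \textbf{Interpolation.} If the kernel estimate is too messy, use the fact that $f\mapsto D^2w$ is bounded on $L^{p_0}(B)$ and on $L^{p_1}(B)$ for any $1<p_0<p_\psi^-\le p_\psi^+<p_1<\infty$, by classical Calder\'on--Zygmund estimates on smooth domains. Boyd's (or the Orlicz-space) interpolation theorem, applied with the Matuszewska--Orlicz indices from \eqref{eq.HQ}, then transfers boundedness to $L^\psi(B)$.
\end{itemize}

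The interpolation route is the more robust one, and we would adopt it as the main argument. It also gives Step~2 directly, and it yields $\|w\|_{W^{2,\psi}(B)}\lesssim\|f\|_{L^\psi(B)}$ once the $L^{p_i}$ bounds for the full $W^{2,p_i}$ norm are combined.
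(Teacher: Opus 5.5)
Your argument is correct, but it takes a genuinely different route from the paper, whose proof consists solely of a citation of \cite[Theorem~1.2]{hasto2019calderon}.

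You instead derive the estimate from the classical $W^{2,p}$ theory for the Dirichlet Laplacian on a ball:
\begin{itemize}
\item existence and uniqueness come from $L^\psi(B)\hookrightarrow L^{p}(B)$ for $p\le p_\psi^-$;
\item the Orlicz bound comes from interpolating the linear maps $f\mapsto w,\nabla w,D^2w$ between $L^{p_0}$ and $L^{p_1}$, with $p_0<p_\psi^-$ and $p_1>p_\psi^+$.
\end{itemize}
The interpolation step is legitimate because \eqref{eq.HQ} places the Boyd indices of $L^\psi$ in $[1/p_\psi^+,1/p_\psi^-]\subset(1/p_1,1/p_0)$.

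The paper's route buys brevity. Yours is self-contained, uses only classical ingredients plus one interpolation theorem, and shows exactly where $\Delta_2\cap\nabla_2$ enters.

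Your route also gives more than the lemma states. The $L^{p_i}$ bounds for $D^2w$ are dilation invariant, and the interpolation constants depend only on $\psi$, $p_0$, $p_1$. Hence $\|D^2w\|_{L^\psi(B_{2r})}\lesssim\|f\|_{L^\psi(B_{2r})}$ holds uniformly in $r$, which is the form Proposition~\ref{prop:operators} actually relies on. The full unscaled $W^{2,\psi}$ norm is necessarily $r$-dependent, as you note.

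Two minor points:
\begin{itemize}
\item In your Kelvin alternative, a size bound on the reflected kernel alone only gives a Stieltjes-type operator. That operator is bounded on $L^p$ for $1<p<\infty$, but it is not pointwise controlled by the maximal function: there is a logarithmic loss near $\partial B$. So that alternative would have to use the Calder\'on--Zygmund cancellation structure, not just size.
\item The membership $w\in W^{1,\psi}_0(B)$ deserves a line. For example, approximate $f$ in $L^\psi$ by smooth data (possible by $\Delta_2$), then pass to the limit using the $W^{2,\psi}$ bound and the closedness of $W^{1,\psi}_0(B)$.
\end{itemize}
Neither point affects the interpolation argument you actually adopt.
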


\begin{proof}
	The result follows immediately from \cite[Theorem~1.2]{hasto2019calderon}.
\end{proof}

\vspace{0.2cm}

\begin{proof}[Proof of Proposition \ref{prop:operators}]
	Under the bilinear form, the antisymmetric part of $\mathbf A_0$ vanishes, while the symmetric part is positive definite by uniform ellipticity. A fixed invertible linear change of variables reduces $\mathbf A_0$ to the identity matrix and maps $B_{2r}$ onto an ellipsoid contained in the interior of $\Omega$. The transformed functions are still denoted by $u$ and $v$, so that
	$$
	\sum_{i,j=1}^{d}\int_{\Omega} \partial_{i}v\,\mathbf A_0^{ij}\,\partial_{j} u\,dx
	=
	\sum_{i=1}^{d}\int_{\Omega} \partial_{i}v\,\partial_{i}u\,dx.
	$$

	To prove $(1)$, observe that  the quasi-potential $\mathcal Q_{2r}[\mathbf{G}]$ is represented in terms of the first derivatives of the Green kernel extended by zero outside $B_{2r}$. The scaled norm adapted to $B_{2r}$ is defined by
		$$'\|\cdot\|_{W^{1,\psi}(B_{2r})}:=\|\nabla(\cdot)\|_{L^\psi(B_{2r})}+(2r)^{-1}\|\cdot\|_{L^\psi (B_{2r})}.   $$
		
		Since $\mathcal Q_{2r}[\mathbf{G}] \in W^{1,\psi}_0(B_{2r})$, Poincar\'e's inequality reduces the estimate to the gradient term. For $j \in \{1,\dots,d\}$, we have
		$$
		\partial_{j}\mathcal Q_{2r}[\mathbf{G}]
		= - \sum_{i=1}^d \int_{\mathbb{R}^d} \partial_{ij} \widetilde{\mathbf{k}}_{0}(x-y)\, \mathbf{G}^{i}(y)\,dy,
		$$
		where
		$$
		\partial_{i} \widetilde{\mathbf{k}}_{0}(z)= -\omega_d^{-1} z^i |z|^{-d},
		$$
		and
		$$
		\partial_{ij}\widetilde{\mathbf{k}}_{0}(z)
		= -\omega_d^{-1}\Big(\delta_{ij} |z|^{-d} - d\, z^i z^j |z|^{-(d+2)}\Big),
		\qquad z\neq 0.
		$$
		
		Since $|z^i z^j| \le |z|^2$, the kernel $\partial_{ij}\widetilde{\mathbf{k}}_{0}$ is homogeneous of degree $-d$. Moreover its regularity away from the origin implies
		$$
		|\partial_{ij}\widetilde{\mathbf{k}}_{0}(z+h)-\partial_{ij}\widetilde{\mathbf{k}}_{0}(z)|
		\lesssim |h|\,|z|^{-d-1}
		\quad \text{whenever } 2|h|\le |z|,
		$$
		and consequently, for every $0<\varepsilon\le 1$,
		$$
		|\partial_{ij}\widetilde{\mathbf{k}}_{0}(z+h)-\partial_{ij}\widetilde{\mathbf{k}}_{0}(z)|
		\lesssim |h|^\varepsilon |z|^{-d-\varepsilon}.
		$$
		Thus $\partial_{ij}\widetilde{\mathbf{k}}_{0}$ is a Calder\'on--Zygmund kernel, and Lemma~\ref{lem:1} yields
		$$
		'\|\mathcal Q_{2r}[\mathbf{G}]\|_{W^{1,\psi}(B_{2r})} \lesssim \|\mathbf{G}\|_{L^\psi (B_{2r})}.
		$$

		To prove $(2)$, let $\mathbf{h}\in L^\gamma(B_{2r})$. By Lemma~\ref{lem:2}, we have
		$\mathcal{P}_{2r}(\mathbf{h})\in W^{2,\gamma}(B_{2r})$ and
		\[
		\|\nabla^2 \mathcal{P}_{2r}(\mathbf{h})\|_{L^\gamma(B_{2r})}
		\lesssim \|\mathbf{h}\|_{L^\gamma(B_{2r})}.
		\]
		If $\mathcal{I}_{\gamma,1}^0<\infty$, then the embedding
		$W^{1,\gamma}(B_{2r}) \hookrightarrow L^\psi(B_{2r})$ holds. Hence
		$$
		'\|\mathcal {P}_{2r}(\mathbf{h})\|_{W^{1,\psi}(B_{2r})}\lesssim \, '\|\mathcal {P}_{2r}(\mathbf{h})\|_{W^{2,\gamma}(B_{2r})}\lesssim \|\mathbf{h}\|_{L^\gamma (B_{2r})}.
		$$
\end{proof}

This yields the following result.
\begin{proposition}\label{Thm:TR bound}
	Let $\psi$ be an $N$-function satisfying the $\Delta_2 \cap \nabla_2$-condition, and let $R>0$. Assume that the coefficients satisfy the $\mathcal{H}_{\psi}$-condition on $B_R$. Then, for every $0<r \leq R/2$, the operator $\mathcal{K}_r$ is bounded on $W^{1,\psi}(B_{2r})$. Moreover its operator norm on $W^{1,\psi}(B_{2r})$ satisfies
	\[
	\|\mathcal{K}_r\| \leq \frac12
	\]
	for sufficiently small $r$.
\end{proposition}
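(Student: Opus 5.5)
We split $\mathcal K_r$ into its four pieces and estimate each in the scaled norm $'\|\cdot\|_{W^{1,\psi}(B_{2r})}$ from the proof of Proposition~\ref{prop:operators}. For $w\in W^{1,\psi}(B_{2r})$ write
\[
\mathcal K_r w=\mathcal Q_{2r}\bigl[(\mathbf A_r-\mathbf A_0)\nabla w\bigr]+\mathcal Q_{2r}[\mathbf B_r w]-\mathcal P_{2r}[\mathbf C_r\cdot\nabla w]-\mathcal P_{2r}[\mathbf V_r w].
\]
By Proposition~\ref{prop:operators} the operator norms of $\mathcal Q_{2r}\colon L^\psi\to W^{1,\psi}$ and $\mathcal P_{2r}\colon L^\gamma\to W^{1,\psi}$ are bounded uniformly in $r$. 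Here we assume $\mathcal I^0_{\gamma,1}<\infty$, which is implicit in the standing hypotheses since $\gamma_d\sim\psi$. It therefore suffices to show that each of the four data terms has $L^\psi$ or $L^\gamma$ norm at most $\epsilon(r)\,'\|w\|_{W^{1,\psi}(B_{2r})}$ with $\epsilon(r)\to0$ as $r\to0$. Choosing $r$ small then gives total norm $\le\frac12$. Boundedness for every $r\le R/2$ follows from the same estimates with $\epsilon(r)$ merely finite.

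\textbf{Leading term.} Since $\eta_r$ vanishes for $|x|\ge 7r/4$, we have $|\mathbf A_r-\mathbf A_0|\le\eta_r|\mathbf A-\mathbf A(0)|\le\omega_{\mathbf A}(2r)$ on $B_{2r}$. Hence
\[
\|(\mathbf A_r-\mathbf A_0)\nabla w\|_{L^\psi}\le\omega_{\mathbf A}(2r)\|\nabla w\|_{L^\psi},
\]
which tends to $0$ by continuity of $\mathbf A$.

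\textbf{Lower-order terms.} These require Orlicz Hölder inequalities combined with the Sobolev embedding $W^{1,\psi}\hookrightarrow L^{\psi_d}$ (or $C^{\varpi_\psi}$).

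For $\mathbf B_r w$, use $\|\mathbf B_r w\|_{L^\psi(B_{2r})}\le\|\mathbf B\|_{L^\psi(B_{2r})}\|w\|_{L^\infty(B_{2r})}$, together with $\|w\|_\infty\lesssim{}'\|w\|_{W^{1,\psi}}$ on $B_{2r}$. In the case $\mathcal I^\infty_{\psi,1}<\infty$ this follows from the embedding into $C^{\varpi_\psi}$ via a scaled Morrey-type estimate. In general one instead uses $L^{\psi_d}$ together with the Hölder pairing $L^{d}\cdot L^{\psi_d}\subset L^\psi$. The small factor is $\|\mathbf B\|_{L^\psi(B_{2r})}$ (or $\|\mathbf B\|_{L^d(B_{2r})}$), which tends to $0$ by absolute continuity of the norm, available because $\psi\in\Delta_2$.

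For $\mathbf C_r\cdot\nabla w$, use the generalized Hölder inequality $\|fg\|_{L^\gamma}\lesssim\|f\|_{L^d}\|g\|_{L^\psi}$. This is justified by the inverse relation $\gamma^{-1}(t)\sim t^{1/d}\psi^{-1}(t)$ derived in the introduction, combined with the O'Neil criterion $\Phi_1^{-1}\Phi_2^{-1}\lesssim\Phi_3^{-1}$. It gives $\lesssim\|\mathbf C\|_{L^d(B_{2r})}\|\nabla w\|_{L^\psi}$, and again $\|\mathbf C\|_{L^d(B_{2r})}\to0$.

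For $\mathbf V_r w$, use $\|\mathbf V w\|_{L^\gamma}\le\|\mathbf V\|_{L^\gamma(B_{2r})}\|w\|_\infty$, or else the Hölder pairing $L^{\gamma}\cdot L^{\psi_d}$ into a space embedded in $L^\gamma$ via $\gamma_d\sim\psi$. In either case $\|\mathbf V\|_{L^\gamma(B_{2r})}\to0$ by $\Delta_2$ of $\gamma$, which is inherited from $\psi$.

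\textbf{Main obstacle.} The key difficulty is uniformity in $r$ of the Sobolev and Hölder constants. The embedding constants of $W^{1,\psi}(B_{2r})$ depend on the radius, and Orlicz norms do not scale homogeneously. I would handle this by working only with functions in $W_0^{1,\psi}(B_{2r})$, which is legitimate since $\operatorname{supp}u\subset B_r$ and the potentials are defined with zero boundary values. Such functions can be extended by zero to $\RR^d$, where the Cianchi embeddings hold with constants independent of the domain. Poincaré's inequality then lets the scaled norm control everything by $\|\nabla w\|_{L^\psi}$, so all $r$-dependence is absorbed into the small factors.
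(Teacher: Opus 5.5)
Your proposal is correct and is essentially the paper's proof. It uses the same four-term split of $\mathbf G$ and $\mathbf h$ fed into the $r$-uniform bounds of Proposition~\ref{prop:operators}, continuity of $\mathbf A$ for the leading term, the Orlicz H\"older pairing $L^d\cdot L^\psi\hookrightarrow L^\gamma$ (via $\gamma^{-1}(t)\sim t^{1/d}\psi^{-1}(t)$) for the $\mathbf C$-term, and a sup-norm bound on $w$ for the $\mathbf B$- and $\mathbf V$-terms, where $\mathcal I^\infty_{\psi,1}<\infty$ enters exactly as in the paper, with smallness coming from the vanishing local norms; the paper also extracts a factor $r\psi^{-1}(r^{-d})$ from the sup bound, which your bounded-constant version does not need.

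You should drop your ``in general'' fallbacks and state $\mathcal I^\infty_{\psi,1}<\infty$ as a hypothesis, because those fallbacks fail as written: $\mathbf B\in L^\psi$ does not give $\mathbf B\in L^d$ when $\psi$ grows slower than $t^d$, and $L^\gamma\cdot L^{\psi_d}\not\subset L^\gamma$ unless $L^{\psi_d}\subset L^\infty$.
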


%%%%%%%%%%%%%%%%%%%%%%%%%%%%%%%%%%%%%%%%%%%%%%%%%%%%%
\begin{proof}
	By the triangle inequality and Proposition \ref{prop:operators}, we have
	\begin{align*}
		'\|\mathcal{K}_r u_r\|_{W^{1,\psi} (B_{2r})}
		&\le
		\, '\|\mathcal Q_{2r}[\mathbf{G}]\|_{W^{1,\psi} (B_{2r})}
		+\, 
		'\|\mathcal {P}_{2r}[\mathbf{h}]\|_{W^{1,\psi} (B_{2r})}\\
		&\lesssim
		\|\mathbf{G}\|_{L^\psi (B_{2r})}
		+
		\|\mathbf{h}\|_{L^\gamma (B_{2r})}.
	\end{align*}
	We first estimate \(\|\mathbf{G}\|_{L^\psi (B_{2r})}\). By H\"older's inequality in Orlicz spaces (see, e.g., \cite{o1960fractional}), we obtain
	\begin{align*}
		\|\mathbf{G}\|_{L^\psi (B_{2r})} &\leq \|(\mathbf{A}_r-\mathbf A_0)\cdot \nabla u_r\|_{L^\psi (B_{2r})}
		+\|\mathbf{B}_r u_r\|_{L^\psi (B_{2r})}\\
		&\leq \|\mathbf{A}_r-\mathbf A_0\|_{\infty, B_{2r}} \, \|\nabla u_r\|_{L^\psi (B_{2r})} + \|\mathbf{B}_r\|_{L^\psi (B_{2r})}\, \|u_r\|_{\infty, B_{2r}}\\
		& \lesssim \Big(\|\mathbf{A}_R-\mathbf A_0\|_{\infty, B_{2r}} +\|\mathbf{B}_r\|_{L^\psi (B_{2r})}\, r\psi^{-1}(r^{-d})\Big)  \, '\|u_r\|_{W^{1,\psi}(B_{2r})}\\
		& \le \|\eta_r\|_{\infty, B_{2r}} \Big(\|\mathbf{A}-\mathbf A_0\|_{\infty, B_{2r}} +\|\mathbf{B}\|_{L^\psi (B_{2r})}\, r \psi^{-1}(r^{-d})\Big)  \, '\|u_r\|_{W^{1,\psi}(B_{2r})},
	\end{align*}
	and also
	\begin{align*}
		\|\mathbf{h}\|_{L^\gamma (B_{2r})} &\leq \|\mathbf{C}_r\cdot\nabla u_r\|_{L^\gamma (B_{2r})}
		+\|\mathbf{V}_r u_r\|_{L^\gamma (B_{2r})}\\
		&\leq  \|\mathbf{C}_r\|_{L^d (B_{2r})}\,  \|\nabla u_r\|_{L^{\gamma_1} (B_{2r})}
		+\|\mathbf{V}_r\|_{L^\gamma (B_{2r})}\| u_r\|_{\infty, B_{2r}},
	\end{align*}
	where \(\gamma^{-1}(t)\sim t^{1/d}\gamma_1^{-1}(t)\), and \(\gamma_1 \sim \psi\). Therefore
	\begin{align*}
		\|\mathbf{h}\|_{L^\gamma (B_{2r})} &\leq  \|\mathbf{C}_r\|_{L^d (B_{2r})}\,  \|\nabla u_r\|_{L^\psi (B_{2r})}
		+\|\mathbf{V}_r\|_{L^\gamma (B_{2r})} \, r\psi^{-1}(r^{-d})\,  '\| u_r\|_{L^\psi (B_{2r})}\\
		& \lesssim \Big(\|\mathbf{C}_r\|_{L^d (B_{2r})} + \|\mathbf{V}_r\|_{L^\gamma (B_{2r})} \, r \psi^{-1}(r^{-d}) \Big)\,  '\| u_r\|_{W^{1,\psi} (B_{2r})}\\
		& \leq \|\eta_r\|_{\infty, B_{2r}}\Big(\|\mathbf{C}\|_{L^d (B_{2r})} + \|\mathbf{V}\|_{L^\gamma (B_{2r})} \, r \psi^{-1}(r^{-d}) \Big)\,  '\| u_r\|_{W^{1,\psi} (B_{2r})}.
	\end{align*}
	Combining the above estimates, we obtain
	\begin{align*}
		'\|\mathcal{K}_r u_r\|_{W^{1,\psi} (B_{2r})}
		&\lesssim \,   \|\eta_r\|_{\infty, B_{2r}}\,\Big(\|\mathbf{A}-\mathbf A_0\|_{\infty, B_{2r}} +\|\mathbf{B}\|_{L^\psi (B_{2r})}\, r \psi^{-1}(r^{-d})\\
		& \qquad \qquad \qquad +\|\mathbf{C}\|_{L^d (B_{2r})} + \|\mathbf{V}\|_{L^\gamma (B_{2r})} \, r \psi^{-1}(r^{-d}) \Big )\,  '\| u_r\|_{W^{1,\psi} (B_{2r})}\\
		& :=\delta(r) \, '\| u_r\|_{W^{1,\psi} (B_{2r})}. 
	\end{align*}

	%%%%%%%%%%%%%%%%%%%%%%%%%%%%%%%%%%%%%%%%%%%%%%%%%%%%%%
	Since $\mathcal{I}_{\psi,1}^{\infty}<\infty$, we have
	\[
	\psi^{-1}(s)=o(s^{1/d}) \qquad \text{as } s\to\infty.
	\]
	Hence, for every $\varepsilon>0$, there exists $s_{\varepsilon}>0$ such that
	\[
	\psi^{-1}(s)\leq \varepsilon s^{1/d}
	\qquad \text{for all } s\geq s_{\varepsilon}.
	\]
	Taking $s=r^{-d}$, we obtain
	\[
	\psi^{-1}(r^{-d})\leq \varepsilon r^{-1}
	\]
	for all sufficiently small $r$. Since $\varepsilon$ is arbitrary, it follows that
	\[
	r\psi^{-1}(r^{-d})\to 0
	\qquad \text{as } r\to 0.
	\]
	
	Moreover
	\[
	\|\eta_r\|_{\infty,B_{2r}}=1,
	\]
	while the continuity of $\mathbf A$ at the origin implies that
	\[
	\|\mathbf{A}-\mathbf A_0\|_{\infty, B_{2r}} \to 0
	\qquad \text{as } r\to 0.
	\]
	Furthermore, the local norms
	\[
	\|\mathbf{B}\|_{L^\psi (B_{2r})},
	\qquad
	\|\mathbf{C}\|_{L^d(B_{2r})},
	\qquad \text{and} \qquad
	\|\mathbf{V}\|_{L^\gamma (B_{2r})}
	\]
	vanish as $r\to 0$. Hence
	\[
	\delta(r)\le \frac12
	\]
	for all sufficiently small $r$. In particular,
	\[
	\|\mathcal{K}_r\|\le \frac12 .
	\]
\end{proof}

We now state the following lemma, which provides a local interior estimate.

\vspace{0.2cm}

\begin{lemma}\label{Lem: principal}
	Let $\psi$ be an $N$-function satisfying the $\Delta_2 \cap \nabla_2$-condition, and let $R>0$. Assume that the coefficients satisfy the $\mathcal{H}_\psi$-condition on $B_R$. Then there exists $0<r_0 \le R/2$ such that the following holds for every $0<r\le r_0$: if $u$ satisfies \eqref{eq.1} with $\operatorname{supp} u \subset B_r$, $\mathbf{F} \in L^\psi(B_{2r})$, and $g \in L^\gamma(B_{2r})$, then
	\begin{equation}
		'\|u\|_{W^{1,\psi}(B_{2r})}
		\le
		c \Big(
		\|\mathbf{F}\|_{L^\psi (B_{2r})}
		+
		\|g\|_{L^\gamma (B_{2r})}
		\Big),
	\end{equation}
	where the positive constant $c$ is independent of $r$.
\end{lemma}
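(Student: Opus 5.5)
The plan is to exploit the fixed-point representation \eqref{eq:local-TR}, namely $u=u_r=\mathcal K_r u_r+\mathbf z_r$, together with the contraction bound of Proposition~\ref{Thm:TR bound}. First, choose $r_0\le R/2$ so small that $\delta(r)\le \frac12$ for all $0<r\le r_0$. This is possible by the argument in the proof of Proposition~\ref{Thm:TR bound}. The quantities entering $\delta(r)$ are $\|\mathbf A-\mathbf A_0\|_{\infty,B_{2r}}$, the local norms of $\mathbf B,\mathbf C,\mathbf V$ on $B_{2r}$, and $r\psi^{-1}(r^{-d})$, and all of them tend to $0$ as $r\to0$. For the first term this uses the modulus of continuity $\omega_{\mathbf A}$, so the threshold is uniform with respect to the base point. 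Consequently $'\|\mathcal K_r v\|_{W^{1,\psi}(B_{2r})}\le \frac12\, '\|v\|_{W^{1,\psi}(B_{2r})}$ for every $v$ in the scaled space.

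Second, since $\operatorname{supp}u\subset B_r$ and $u\in W^{1,\psi}(B_{2r})$ (at least a priori, see below), the identity $u=\mathcal K_r u+\mathbf z_r$ gives
\[
'\|u\|_{W^{1,\psi}(B_{2r})}\le \tfrac12\,'\|u\|_{W^{1,\psi}(B_{2r})}+'\|\mathbf z_r\|_{W^{1,\psi}(B_{2r})}.
\]
Absorbing the first term, and noting that $'\|u\|$ is finite, yields $'\|u\|_{W^{1,\psi}(B_{2r})}\le 2\,'\|\mathbf z_r\|_{W^{1,\psi}(B_{2r})}$. Third, bound $\mathbf z_r=\mathcal Q_{2r}[\mathbf F]-\mathcal P_{2r}[g]$ using Proposition~\ref{prop:operators}. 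Part (1) gives $'\|\mathcal Q_{2r}[\mathbf F]\|\lesssim\|\mathbf F\|_{L^\psi(B_{2r})}$. Part (2) gives $'\|\mathcal P_{2r}[g]\|\lesssim \|g\|_{L^\gamma(B_{2r})}$, where the hypothesis $\mathcal I^0_{\gamma,1}<\infty$ is justified because $\gamma_d\sim\psi$ (as computed in the introduction), so the Sobolev embedding $W^{1,\gamma}\hookrightarrow L^{\gamma_d}=L^\psi$ is available. Since those bounds are independent of $r$, we obtain the claimed estimate with $c$ independent of $r$.

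The main obstacle is the a priori membership $u\in W^{1,\psi}(B_{2r})$, which is needed for the absorption; a weak solution is only known to lie in $W^{1,2}$. I would handle this by the Neumann series. Because $\|\mathcal K_r\|\le\frac12$, the operator $I-\mathcal K_r$ is invertible on $W^{1,\psi}(B_{2r})$, so $w:=(I-\mathcal K_r)^{-1}\mathbf z_r$ lies in $W^{1,\psi}$ and satisfies the bound above. The same construction also works in $W^{1,2}(B_{2r})$, because $\psi=t^2$ is admissible in Propositions~\ref{prop:operators} and~\ref{Thm:TR bound} after possibly shrinking $r_0$. By the embedding $W^{1,\psi}\hookrightarrow W^{1,2}$, which holds since $\psi$ dominates $t^2$ at infinity when $\mathcal I^\infty_{\psi,1}<\infty$ and $d\ge3$, both $u$ and $w$ are fixed points of the contraction in $W^{1,2}$, hence $u=w$. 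If this compatibility argument runs into difficulties, the fallback is to approximate $\mathbf F,g$ by smooth data and pass to the limit using reflexivity of $W^{1,\psi}$.
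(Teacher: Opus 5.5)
Your first three steps are the paper's proof: $\|\mathcal K_r\|\le\frac12$ from Proposition~\ref{Thm:TR bound}, the identity $u=u_r=\mathcal K_r u_r+\mathbf z_r$, inversion of $I-\mathcal K_r$ (equivalently, absorption) giving $'\|u\|_{W^{1,\psi}(B_{2r})}\le 2\,'\|\mathbf z_r\|_{W^{1,\psi}(B_{2r})}$, and the bound on $\mathbf z_r$ from Proposition~\ref{prop:operators}. The paper never addresses whether $u\in W^{1,\psi}(B_{2r})$. It assumes this tacitly, since it is already needed to define $\mathcal Q_{2r}[\mathbf G]$ when $\mathbf G$ contains $\nabla u$. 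That is enough where the lemma is used, because in the proof of Theorem~\ref{main theorem} the functions $\widetilde u_n$ lie in $W^{1,\psi}(\Omega)$ by construction.

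Your extra step, which removes that assumption, has the right structure: both $u$ and $w=(I-\mathcal K_r)^{-1}\mathbf z_r$ are fixed points of a contraction on the larger space $W^{1,2}_0(B_{2r})$. However, the justification you give is false, because $\psi(t)=t^2$ is not admissible in Proposition~\ref{Thm:TR bound}.

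\begin{itemize}
\item That proof controls $\mathbf B_r u_r$ and $\mathbf V_r u_r$ through $\|u_r\|_{\infty,B_{2r}}\lesssim r\psi^{-1}(r^{-d})\,'\|u_r\|_{W^{1,\psi}(B_{2r})}$, which requires $\mathcal I^\infty_{\psi,1}<\infty$.
\item For $t^2$ and $d\ge3$ one has $\int^\infty t^{-1/(d-1)}\,dt=\infty$, $W^{1,2}$ does not embed into $L^\infty$, and $r\psi^{-1}(r^{-d})=r^{1-d/2}\to\infty$.
\item Moreover, $\mathcal H_{t^2}$ only requires $\mathbf B\in L^2$ and $\mathbf V\in L^{2d/(d+2)}$, which does not even give $\mathbf B u\in L^2$.
\end{itemize}

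The $W^{1,2}$ contraction must instead be proved directly. Since $\psi^{-1}(t)=o(t^{1/d})$ and $\gamma^{-1}(t)\sim t^{1/d}\psi^{-1}(t)$, the $\mathcal H_\psi$-condition gives $\mathbf B,\mathbf C\in L^d(B_R)$ and $\mathbf V\in L^{d/2}(B_R)$. Combine this with H\"older's inequality, the scale-invariant Sobolev inequality $\|v\|_{L^{2d/(d-2)}(B_{2r})}\lesssim\|\nabla v\|_{L^2(B_{2r})}$ on $W^{1,2}_0(B_{2r})$, and the energy bounds $\|\nabla\mathcal Q_{2r}[\mathbf G]\|_{L^2}\lesssim\|\mathbf G\|_{L^2}$ and $\|\nabla\mathcal P_{2r}[h]\|_{L^2}\lesssim\|h\|_{L^{2d/(d+2)}}$. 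This gives
\[
\|\mathcal K_r\|_{W^{1,2}_0(B_{2r})\to W^{1,2}_0(B_{2r})}\lesssim \|\mathbf A-\mathbf A_0\|_{\infty,B_{2r}}+\|\mathbf B\|_{L^d(B_{2r})}+\|\mathbf C\|_{L^d(B_{2r})}+\|\mathbf V\|_{L^{d/2}(B_{2r})}\longrightarrow 0 .
\]
With this replacement your uniqueness argument goes through. You also need the $W^{1,2}$ and $W^{1,\psi}$ versions of $\mathcal K_r$ to agree, which holds because they come from the same Dirichlet problems, uniquely solvable in $W^{1,2}_0$.

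The smoothing fallback would not help on its own. The real difficulty is identifying the given $u$ with the $W^{1,\psi}$ solution, and that is again a uniqueness statement.
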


\begin{proof}
	By Proposition~\ref{Thm:TR bound}, one has $\|\mathcal{K}_r\| \le \tfrac12$ for 
	$0 < r \le R/2$. Hence $I - \mathcal{K}_r$ is invertible on $W^{1,\psi}(B_{2r})$. 
	Since $\operatorname{supp} u \subset B_r$, the above decomposition yields $u = u_r$ and
	\[
	u_r - \mathcal{K}_r u_r = \mathbf z_r,
	\]
	where
	\[
	\mathbf z_r = \mathcal{Q}_{2r}[\mathbf{F}] - \mathcal{P}_{2r}[g].
	\]
	By Proposition~\ref{prop:operators},
	\[
	'\|\mathbf z_r\|_{W^{1,\psi}(B_{2r})}
	\lesssim
	\|\mathbf{F}\|_{L^\psi(B_{2r})}
	+
	\|g\|_{L^\gamma(B_{2r})}.
	\]
	Therefore
	\[
	'\|u_r\|_{W^{1,\psi}(B_{2r})}
	\le
	2\, \,  '\|\mathbf z_r\|_{W^{1,\psi}(B_{2r})}
	\lesssim
	\|\mathbf{F}\|_{L^\psi(B_{2r})}
	+
	\|g\|_{L^\gamma(B_{2r})}.
	\]
\end{proof}

\vspace{0.2cm}

We now prove the main result. The proof is based on a standard covering and flattening argument. For each $x_0 \in \overline{\Omega}$, one can find a neighborhood of $x_0$ together with a suitable change of variables mapping it onto either $B_{2r}$ or $B_{2r}^{+} \cup T_{2r}$. The radius $r$ is chosen sufficiently small so that Lemma~\ref{Lem: principal} applies.

\begin{proof}[Proof of Theorem~{\upshape\ref{main theorem}}]

	Assume, by contradiction, that no such constant exists. Then there exist sequences $(\widetilde u_n)_n$, $(\widetilde{\mathbf{F}}_n)_n$, and $(\widetilde g_n)_n$ such that
	\[
	\|\widetilde u_n\|_{W^{1,\psi}(\Omega)}=1,
	\]
	and
	\[
	\|\widetilde u_n\|_{L^1 (\Omega)}
	+\|\widetilde{\mathbf{F}}_n\|_{L^\psi(\Omega)}
	+\|\widetilde g_n\|_{L^\gamma(\Omega)}
	\to 0 .
	\]
	In particular, the sequence $(\widetilde u_n)_n$ is bounded in $W^{1,\psi}(\Omega)$. Hence there exist a subsequence, still denoted by $(\widetilde u_n)_n$, and a function $\widetilde u \in W^{1,\psi}(\Omega)$ such that
	\[
	\widetilde u_n \rightharpoonup \widetilde u
	\quad \text{weakly in } W^{1,\psi}(\Omega).
	\]
	Moreover
	\[
	\widetilde u_n \to \widetilde u
	\quad \text{strongly in } L^\psi(\Omega),
	\]
	and therefore also strongly in $L^1(\Omega)$. Since
	\[
	\|\widetilde u_n\|_{L^1 (\Omega)} \to 0,
	\]
	it follows that $\widetilde u=0$ almost everywhere in $\Omega$.
	
	Fix $k \in \mathbb{N}$ and choose a cut-off function
	\[
	\chi_k \in C_c^\infty(B_{2r}(x_0))
	\]
	such that
	\[
	0\le \chi_k\le 1
	\quad \text{and} \quad
	\chi_k=1 \text{ on } B_{r/2}(x_0).
	\]
	Define
	\[
	\widetilde u_{k,n}(y):=(\chi_k\widetilde u_n)(x_0+y).
	\]
	For simplicity, we use the same notation for the transformed cut-off function and the transformed functions. Then $\widetilde u_{k,n}$ satisfies
	\begin{equation}\label{eq.5}
		\begin{aligned}
			&\sum_{i=1}^{d}\int_{B_{2r}} \partial_{i}v\Bigl(
			\sum_{j=1}^{d}\mathbf{A}_r^{ij} \partial_j \widetilde u_{k,n} 
			+\mathbf{B}_r^i \widetilde u_{k,n}
			+\mathbf{E}_{k,n,r}^i
			\Bigr)\,dy \\
			&\qquad\qquad\qquad\qquad\qquad
			+\int_{B_{2r}} v\Bigl(
			\sum_{i=1}^{d}\mathbf{C}_r^i \partial_i \widetilde u_{k,n}
			+\mathbf{V}_r \widetilde u_{k,n}
			+\mathbf{q}_{k,n,r}
			\Bigr)\,dy
			=0.
		\end{aligned}
	\end{equation}
	where
	\[
	\mathbf{E}_{k,n,r}^i
	:= \chi_k \widetilde{\mathbf{F}}_n^i
	- \sum_{j=1}^d \mathbf{A}_r^{ij}\,\partial_j \chi_{k}\,\widetilde u_{k,n},
	\]
	and
	\[
	\mathbf{q}_{k,n,r}
	:= \chi_k \widetilde g_n
	-\sum_{i=1}^d \mathbf C_r^i \partial_i \chi_{k}\,\widetilde u_{k,n}
	+\sum_{i=1}^d \partial_i \chi_{k}
	\Bigl(
	\sum_{j=1}^d \mathbf{A}_r^{ij}\,\partial_j \widetilde u_{k,n}
	+\mathbf{B}_r^i\,\widetilde u_{k,n}
	+\widetilde{\mathbf{F}}_n^i
	\Bigr).
	\]
	Since $0\le \chi_k\le 1$, it follows that
	\[
	\|\chi_k \widetilde{\mathbf{F}}_n^{i}\|_{L^\psi (B_{2r})}
	\le
	\|\widetilde{\mathbf{F}}_n^{i}\|_{L^\psi (B_{2r})},
	\]
	and the right-hand side tends to $0$. Moreover $\mathbf{A}_r^{ij}$ and $\partial_j \chi_{k}$ are bounded. Hence
	\[
	\|\mathbf{A}_r^{ij}\partial_j \chi_{k}\widetilde u_{k,n}\|_{L^\psi (B_{2r})}
	\lesssim
	\|\widetilde u_{k,n}\|_{L^\psi (B_{2r})}
	\to 0.
	\]
	Consequently $\mathbf{E}_{k,n,r}^{i}$ converges to $0$ in $L^\psi(B_{2r})$.
	
	We decompose $\mathbf{q}_{k,n,r}$ as
	\[
	\mathbf{q}_{k,n,r}
	=
	\mathbf{q}^{(1)}_{k,n,r}
	+
	\mathbf{q}^{(2)}_{k,n,r},
	\]
	where
	\[
	\mathbf{q}^{(1)}_{k,n,r}
	=
	\chi_k \widetilde g_n
	-
	\sum_{i=1}^d \mathbf{C}_r^i \partial_i \chi_{k}\widetilde u_{k,n}
	+
	\sum_{i=1}^d \partial_i \chi_{k}
	\bigl(
	\mathbf{B}_r^i \widetilde u_{k,n}
	+
	\widetilde{\mathbf{F}}_n^i
	\bigr),
	\]
	and
	\[
	\mathbf{q}^{(2)}_{k,n,r}
	=
	\sum_{i,j=1}^d
	\partial_i \chi_{k}\,
	\mathbf{A}_r^{ij}\,
	\partial_j \widetilde u_{k,n}.
	\]
	%%%%%%%%%%%%%%%%%%%%%%%%%%%%%%%%%%%%%%%%%%%%%%%%
	First, we have
	\[
	\|\chi_k\widetilde g_n\|_{L^\gamma (B_{2r})}
	\le \|\widetilde g_n\|_{L^\gamma (B_{2r})}.
	\]
	Since $\gamma\prec\psi$ and $\partial_i \chi_{k}$ is bounded, it also follows that
	\[
	\|\partial_i \chi_{k}\widetilde{\mathbf{F}}_n^i\|_{L^\gamma (B_{2r})}
	\lesssim \|\widetilde{\mathbf{F}}_n^i\|_{L^\gamma (B_{2r})}
	\lesssim \|\widetilde{\mathbf{F}}_n^i\|_{L^\psi (B_{2r})}.
	\]
	Moreover H\"older's inequality yields 
	\[
	\|\mathbf{C}_r^i \partial_i \chi_{k}\widetilde u_{k,n}\|_{L^\gamma (B_{2r})}
	\lesssim \|\mathbf{C}_r^i\|_{L^d (B_{2r})}\,
	\|\widetilde u_{k,n}\|_{L^\psi (B_{2r})},
	\]
	while
	\[
	\|\partial_i \chi_{k}\mathbf{B}_r^i \widetilde u_{k,n}\|_{L^\gamma (B_{2r})}
	\lesssim \|\mathbf{B}_r^i\|_{L^\psi (B_{2r})}
	\|\widetilde u_{k,n}\|_{L^d (B_{2r})}.
	\]
	Since $\mathcal{I}_{\psi,1}^{\infty}<\infty$, one has
	\[
	\psi^{-1}(t)=o(t^{1/d})
	\qquad \text{as } t\to\infty.
	\]
	Therefore
	\[
	\|\widetilde u_{k,n}\|_{L^d (B_{2r})}
	\lesssim \|\widetilde u_{k,n}\|_{L^\psi (B_{2r})}.
	\]
	Consequently
	\[
	\mathbf{q}^{(1)}_{k,n,r}\to 0
	\qquad \text{in } L^\gamma(B_{2r}).
	\]
	Proposition~\ref{prop:operators} then implies that
	\[
	\mathcal{P}_{2r}(\mathbf{q}^{(1)}_{k,n,r})
	\to 0
	\qquad \text{in } L^\psi(B_{2r}).
	\]
	Next, write
	\[
	\mathbf{q}^{(2)}_{k,n,r}
	= \sum_{j=1}^d m_k^j \partial_j\widetilde u_{k,n},
	\]
	where
	\[
	m_k^j
	=\sum_{i=1}^d\partial_i \chi_{k}\mathbf{A}_r^{ij}
	\]
	is bounded on $B_{2r}$. Let $\zeta\in L^{\psi^*}(B_{2r})$ and define
	\[
	\zeta^j=m_k^j\zeta.
	\]
	Then $\zeta^j\in L^{\psi^*}(B_{2r})$, and
	\[
	\int_{B_{2r}} \mathbf{q}^{(2)}_{k,n,r}\zeta\,dy
	= \sum_{j=1}^d \int_{B_{2r}}
	\partial_j\widetilde u_{k,n}\zeta^j\,dy
	\to 0.
	\]
	Thus
	\[
	\mathbf{q}^{(2)}_{k,n,r}
	\rightharpoonup 0
	\qquad \text{in } L^\psi(B_{2r}).
	\]
	The sequence
	\[
	\bigl(\mathcal{P}_{2r}(\mathbf{q}^{(2)}_{k,n,r})\bigr)_n
	\]
	is bounded in $W^{2,\psi}(B_{2r})$ and is therefore relatively compact in $W^{1,\psi}(B_{2r})$. Since its weak limit is $0$, it follows that
	\[
	\mathcal{P}_{2r}(\mathbf{q}^{(2)}_{k,n,r})
	\to 0
	\qquad \text{in } W^{1,\psi}(B_{2r}).
	\]
	Set
	\[
	\mathcal A_{k,n}=\mathcal{P}_{2r}(\mathbf{q}_{k,n,r}).
	\]
	Then
	\begin{equation}\label{eq:6a}
		\sum_{i=1}^d \int_{B_{2r}}
		\partial_{i}v \,\partial_i \mathcal A_{k,n}\,dy
		=
		\int_{B_{2r}} v\,\mathbf{q}_{k,n,r}\,dy,
		\qquad
		\text{for all } v\in C_c^\infty(B_{2r}).
	\end{equation}
	%%%%%%%%%%%%%%%%%%%%%%%%%%%%%%%%%%%%%%%%%%%%%%%%%%%%%%%%
	Define $w_{k,n}=\widetilde u_{k,n}-\mathcal A_{k,n}$. Subtracting \ref{eq:6a} from \ref{eq.5}, we obtain
	\begin{align*}
		& \sum_{i=1}^d  \int_{B_{2r}}
		\partial_i v\left( \sum_{j=1}^d\mathbf{A}_r^{ij}\, \partial_j w_{k,n}
		+\mathbf{B}_r^i w_{k,n}
		+\widetilde {\mathbf{E}}_{k,n,r}^i\right)\,dy \\ 
		&\qquad + \int_{B_{2r}} v\left(\sum_{i=1}^d\mathbf{C}_r^i \, \partial_i w_{k,n}
		+\mathbf{V}_r w_{k,n}
		+\widetilde {\mathbf{q}}_{k,n,r}\right)\,dy=0,
	\end{align*}
	where
	$$
	\widetilde{\mathbf E}_{k,n,r}^i
	=
	\mathbf{E}_{k,n,r}^i
	+\sum_{j=1}^d\mathbf{A}_r^{ij}\, \partial_j\mathcal{A}_{k,n}
	+\mathbf{B}_r^i \mathcal A_{k,n}
	+\partial_i \mathcal A_{k,n},
	$$
	and
	$$
	\widetilde{\mathbf{q}}_{k,n,r}
	=
	\sum_{i=1}^d \mathbf{C}_r^i \, \partial_i\mathcal A_{k,n}
	+\mathbf{V}_r \mathcal A_{k,n}.
	$$
	Since $\mathcal A_{k,n}\to 0$ in $W^{1,\psi}(B_{2r})$ and
	$\mathbf{E}_{k,n,r}^i\to 0$ in $L^\psi(B_{2r})$, it follows that
	$\widetilde {\mathbf{E}}_{k,n,r}^i\to 0$ in $L^\psi(B_{2r})$ and
	$\widetilde{\mathbf{q}}_{k,n,r}\to 0$ in $L^\gamma(B_{2r})$.
	Lemma~\ref{Lem: principal} therefore applies to $w_{k,n}$ for all sufficiently small $r$. Hence
	$$
	'\|w_{k,n}\|_{W^{1,\psi} (B_{2r})}
	\le
	c_4\Bigl(
	\|\widetilde {\mathbf{E}}_{k,n,r}\|_{L^\psi (B_{2r})}
	+
	\|\widetilde {\mathbf{q}}_{k,n,r}\|_{L^\gamma (B_{2r})}
	\Bigr)
	\to 0.
	$$
	Therefore $\widetilde u_{k,n}\to 0$ in $W^{1,\psi}(B_{2r})$. Since
	$(\chi_k)_{k=1}^d$ is a partition of unity on $B_{2r}$, we have
	$$
	\widetilde u_n=\sum_{k=1}^d \widetilde u_{k,n}.
	$$
	Consequently
	$$
	\|\widetilde u_n\|_{W^{1,\psi} (B_{2r})}
	\le
	\sum_{k=1}^d
	\|\widetilde u_{k,n}\|_{W^{1,\psi} (B_{2r})}
	\to 0.
	$$
	This contradicts the fact that
	$$
	\|\widetilde u_n\|_{W^{1,\psi}(\Omega)}=1.
	$$

\end{proof}

In the boundary case, the local problem is posed on \(B_{2r}^{+}\) with homogeneous Dirichlet boundary condition on \(T_{2r}\). For a function \(g\) defined on \(B_{2r}^{+}\), let \(\mathcal{R}(x',x_d)=(x',-x_d)\) and set \(B_{2r}^{-}:=\mathcal{R}(B_{2r}^{+})\). Define the odd extension of \(g\) by
\[
\widetilde g(x)=
\begin{cases}
	g(x), & x_d>0,\\
	-g(\mathcal{R}x), & x_d<0.
\end{cases}
\]
We then define
\[
\mathcal{P}_{2r}(g)
:=
\bigl(\mathcal{P}_{2r}\widetilde g\bigr)\big|_{B_{2r}^{+}}.
\]
By invariance under the reflection \(\mathcal{R}\) and uniqueness of the Dirichlet problem in \(B_{2r}\), the potential \(\mathcal{P}_{2r}(\widetilde g)\) is odd with respect to the hyperplane \(x_d=0\). Consequently
\[
\mathcal{P}_{2r}(g)=0
\qquad \text{on } T_{2r}.
\]
Given a vector field \(\mathbf{F}\) on \(B_{2r}^{+}\), let \(\widetilde{\mathbf{F}}\) denote its even extension across \(T_{2r}\); namely
\[
\widetilde{\mathbf{F}}(x)=
\begin{cases}
	\mathbf{F}(x), & x_d\ge 0,\\
	\mathbf{F}(\mathcal{R}x), & x_d\le 0.
\end{cases}
\]
We define
\begin{align*}
	\mathcal{Q}_{2r}[\mathbf{F}](x)
	=&
	- \sum_{i=1}^d
	\int_{B_{2r}}
	\partial_i \mathbf{k}_{0}(x-\xi)\,
	\widetilde{\mathbf{F}}^{\,i}(\xi)\,d\xi
	\\
	&\quad
	+
	2\sum_{i=1}^{d-1}
	\int_{B_{2r}^{-}}
	\partial_i \mathbf{k}_{0}(x-\xi)\,
	\widetilde{\mathbf{F}}^{\,i}(\xi)\,d\xi
	\\
	=&
	\sum_{i=1}^{d-1}
	\int_{B_{2r}^{-}}
	\Bigl[
	\partial_i \mathbf{k}_{0}(x-\xi)
	-
	\partial_i \mathbf{k}_{0}(x-\mathcal{R}\xi)
	\Bigr]
	\widetilde{\mathbf{F}}^{\,i}(\xi)\,d\xi
	\\
	&\quad
	-
	\int_{B_{2r}^{-}}
	\Bigl[
	\partial_d \mathbf{k}_{0}(x-\mathcal{R}\xi)
	+
	\partial_d \mathbf{k}_{0}(x-\xi)
	\Bigr]
	\widetilde{\mathbf{F}}^{\,d}(\xi)\,d\xi .
\end{align*}
For \(i=1,\ldots,d-1\), one has
\[
\partial_i \mathbf{k}_{0}(x-\mathcal{R}\xi)
=
\partial_i \mathbf{k}_{0}(x-\xi)
\qquad \text{whenever } x_d=0,
\]
whereas
\[
\partial_d \mathbf{k}_{0}(x-\mathcal{R}\xi)
=
-\partial_d \mathbf{k}_{0}(x-\xi).
\]
Hence
\[
\mathcal{Q}_{2r}[\mathbf{F}]=0
\qquad \text{on } T_{2r}.
\]
Therefore Propositions \ref{prop:operators}, \ref{Thm:TR bound}, and Lemma \ref{Lem: principal} remain valid with \(B_{2r}\) replaced by \(B_{2r}^{+}\). This completes the proof of Theorem \ref{main theorem}.

%%%%%%%%%%%%%%%%%%%%%%%%%%%%%%%%%%%%%%%%%%%%%%%%%

\section{Examples}\label{sec5}

Applications of Theorem \ref{main theorem} to standard choices of $\psi$ are presented in this section. We consider only the intermediate regime between $L^d(\Omega)$ and $L^{d+\varepsilon}(\Omega)$, for some $\varepsilon>0$. The relevant computations are summarized in Table \ref{tab:psi}.

\subsection{Lebesgue case}Let $\psi(t)=t^{d+\varepsilon}$ for some $\varepsilon>0$. This corresponds to the supercritical Lebesgue case. Then
	\[
	\psi^*(t)\sim t^{(d+\varepsilon)/(d+\varepsilon-1)}
	\qquad\text{and}\qquad
	\gamma(t)\sim t^{d(d+\varepsilon)/(2d+\varepsilon)}.
	\]
	Define
	\[
	X_{d+\varepsilon}:=L^{d+\varepsilon},
	\qquad
	Y_{d+\varepsilon}:=L^{\frac{d(d+\varepsilon)}{2d+\varepsilon}}.
	\]
	Then
	\[
	\mathcal H_{\psi}:=\left\{
	\begin{array}{l}
		\text{1. $\mathbf A$ is continuous on $\mathcal{O}$,} \\[2pt]
		\text{2. $\mathbf{B}\in X_{d+\varepsilon}(\mathcal{O})$, $\mathbf{C}\in L^d(\mathcal{O})$, and} \\[2pt]
		\text{\phantom{2. }$\mathbf{V}\in Y_{d+\varepsilon}(\mathcal{O})$}
	\end{array}\right\}.
	\]
	Hence the assumptions of Theorem \ref{main theorem} are satisfied. If
	\[
	u\in W^{1}X_{d+\varepsilon}(\Omega)
	\]
	is a solution to \eqref{eq.1}, then
	\[
	\|u\|_{W^{1} X_{d+\varepsilon}(\Omega)}
	\le
	c\Bigl(
	\|u\|_{L^1(\Omega)}
	+
	\|\mathbf{F}\|_{X_{d+\varepsilon}(\Omega)}
	+ 
	\|g\|_{Y_{d+\varepsilon}(\Omega)}
	\Bigr),
	\]
	where $c$ is independent of $u$, $\mathbf{F}$, and $g$. This recovers the classical estimate; see \cite[Theorem 5.5.5', p.~156]{morrey1966multiple}.
\subsection{Orlicz--Zygmund case}
	Two regimes are considered.\\
	\noindent
	(i)~Supercritical Zygmund case: Assume that
		\[
		\psi(t)\sim_{\infty} t^{d+\varepsilon}(\log t)^\alpha,
		\]
		where \(\varepsilon>0\) and \(\alpha\in\mathbb R\). The Young conjugate satisfies
		\[
		\psi^*(t)\sim_{\infty} t^p(\log t)^\beta,
		\]
		where
		\[
		p=\frac{d+\varepsilon}{d+\varepsilon-1}
		\quad\text{and}\quad
		\beta=-\frac{\alpha}{d+\varepsilon-1}.
		\]
		Hence
		\[
		\gamma(t)\sim_{\infty}
		t^{\frac{d(d+\varepsilon)}{2d+\varepsilon}}
		(\log t)^{\frac{\alpha d}{2d+\varepsilon}}.
		\]
		Define
		\[
		X^{\log}_{d+\varepsilon,\alpha}
		:=
		L^{d+\varepsilon}(\log L)^{\alpha},
		\qquad
		Y^{\log}_{d+\varepsilon,\alpha}
		:=
		L^{\frac{d(d+\varepsilon)}{2d+\varepsilon}}
		(\log L)^{\frac{\alpha d}{2d+\varepsilon}}.
		\]
		Then
		\[
		\mathcal H_{\psi}
		:=
		\left\{
		\begin{array}{l}
			\text{1. \(\mathbf{A}\) is continuous on \(\mathcal{O}\),} \\[2pt]
			\text{2. \(\mathbf{B}\in X^{\log}_{d+\varepsilon,\alpha}(\mathcal{O})\), }\\[2pt]
			\text{\phantom{2. } \(\mathbf{C}\in L^d(\mathcal{O})\), and }\\[2pt]
			\text{\phantom{2. } \(\mathbf{V}\in Y^{\log}_{d+\varepsilon,\alpha}(\mathcal{O})\).}
		\end{array}
		\right\}.
		\]
		In this case, if
		\[
		u\in W^{1}X^{\log}_{d+\varepsilon,\alpha}(\Omega)
		\]
		is a solution to \eqref{eq.1}, then
		\[
		\|u\|_{W^{1}X^{\log}_{d+\varepsilon,\alpha}(\Omega)}
		\le
		c\left(
		\|u\|_{L^1(\Omega)}
		+
		\|\mathbf{F}\|_{X^{\log}_{d+\varepsilon,\alpha}(\Omega)}
		+
		\|g\|_{Y^{\log}_{d+\varepsilon,\alpha}(\Omega)}
		\right).
		\]
		(ii)~Critical Zygmund case: Assume that
		\[
		\psi(t)\sim_{\infty}
		t^d(\log t)^{(d-1)+\varepsilon},
		\]
		where \(\varepsilon>0\). The same computation yields
		\[
		\gamma(t)\sim_{\infty}
		t^{d/2}(\log t)^{\frac{d-1+\varepsilon}{2}}.
		\]
		Define
		\[
		X^{\log}_{d,(d-1)+\varepsilon}
		:=
		L^d(\log L)^{(d-1)+\varepsilon},
		\qquad
		Y^{\log}_{d,(d-1)+\varepsilon}
		:=
		L^{\frac d2}(\log L)^{\frac{d-1+\varepsilon}{2}}.
		\]
		Then
		\[
		\mathcal H_{\psi}
		:=
		\left\{
		\begin{array}{l}
			\text{1. \(\mathbf{A}\) is continuous on \(\mathcal{O}\),} \\[2pt]
			\text{2. \(\mathbf{B}\in X^{\log}_{d,(d-1)+\varepsilon}(\mathcal{O})\), }\\[2pt]
			\text{\phantom{2. } \(\mathbf{C}\in L^d(\mathcal{O})\), and }\\[2pt]
			\text{\phantom{2. } \(\mathbf{V}\in Y^{\log}_{d,(d-1)+\varepsilon}(\mathcal{O})\).}
		\end{array}
		\right\}.
		\]
		In this case, if
		\[
		u\in W^{1}X^{\log}_{d,(d-1)+\varepsilon}(\Omega)
		\]
		is a solution to \eqref{eq.1}, then
		\[
		\|u\|_{W^{1}X^{\log}_{d,(d-1)+\varepsilon}(\Omega)}
		\le
		c\left(
		\|u\|_{L^1(\Omega)}
		+
		\|\mathbf{F}\|_{X^{\log}_{d,(d-1)+\varepsilon}(\Omega)}
		+
		\|g\|_{Y^{\log}_{d,(d-1)+\varepsilon}(\Omega)}
		\right).
		\]
	Here, \(c\) is independent of \(u\), \(\mathbf{F}\), and \(g\).

%%%%%%%%%%%%%%%%%%%%%%%%%%%%%%%%%%%%%

\subsection{Double-logarithmic Orlicz space case}
	Let 
	\[
	\psi(t)\sim_{\infty} t^{d+\varepsilon}(\log\log t)^\alpha,
	\]
	where $\varepsilon>0$ and $\alpha\in\mathbb R$. Then
	\[
	\psi^*(t)\sim_{\infty} t^{\frac{d+\varepsilon}{d+\varepsilon-1}}
	(\log\log t)^{-\frac{\alpha}{d+\varepsilon-1}}.
	\]
	It follows that
	\[
	\gamma(t)\sim_{\infty} 
	t^{\frac{d(d+\varepsilon)}{2d+\varepsilon}}
	(\log\log t)^{\frac{\alpha d}{2d+\varepsilon}}.
	\]
	Define
	\[
	X^{\log\log}_{d+\varepsilon,\alpha}
	:=L^{d+\varepsilon}(\log\log L)^\alpha,
	\qquad
	Y^{\log\log}_{d+\varepsilon,\alpha}
	:=L^{\frac{d(d+\varepsilon)}{2d+\varepsilon}}
	(\log\log L)^{\frac{\alpha d}{2d+\varepsilon}}.
	\]
	Then
	\[
	\mathcal H_{\psi}
	:=
	\left\{
	\begin{array}{l}
		\text{1. \(\mathbf{A}\) is continuous on \(\mathcal{O}\),} \\[2pt]
		\text{2. \(\mathbf{B}\in X^{\log\log}_{d+\varepsilon,\alpha}(\mathcal{O})\), }\\[2pt]
		\text{\phantom{2. } \(\mathbf{C}\in L^d(\mathcal{O})\), and }\\[2pt]
		\text{\phantom{2. } \(\mathbf{V}\in Y^{\log\log}_{d+\varepsilon,\alpha}(\mathcal{O})\).}
	\end{array}
	\right\}.
	\]
	If 
	\[
	u\in W^1X^{\log\log}_{d+\varepsilon,\alpha}(\Omega)
	\]
	is a solution to \(\eqref{eq.1}\), then
	\[
	\|u\|_{W^1X^{\log\log}_{d+\varepsilon,\alpha}(\Omega)}
	\le
	c\left(
	\|u\|_{L^1(\Omega)}
	+\|\mathbf{F}\|_{X^{\log\log}_{d+\varepsilon,\alpha}(\Omega)}
	+\|g\|_{Y^{\log\log}_{d+\varepsilon,\alpha}(\Omega)}
	\right),
	\]
	where \(c\) is independent of \(u\), \(\mathbf{F}\), and \(g\).

%%%%%%%%%%%%%%%%%%%%%%%%%%%%%%%%%%%%%%%%%%%%%%%%

\section{Discussions and open problems}\label{sec6}

\subsection{\textbf{Rearrangement-invariant spaces and Schauder-type estimates}}
A natural question is to identify a class \((\mathcal{C})\) of rearrangement-invariant Banach function spaces \(X\) (see \cite{bennett1988interpolation} for background material) such that estimate~\eqref{est.1} holds, and to characterize the corresponding optimal space \(Y\) associated with \(X\).

Let \((X(\Omega),\|\cdot\|_{X(\Omega)})\) be a separable, reflexive rearrangement-invariant Banach function space, and let \(X(0,|\Omega|)\) denote its representation space. If \(f\) is a measurable function on \(\Omega\), and \(f^*\) denotes the non-increasing rearrangement of \(f\) on \((0,|\Omega|)\), then
\[
\|f\|_{X(\Omega)}= \| f^*\|_{X(0,|\Omega|)}.
\]
  Let \(X'(0,|\Omega|)\) denote the associate space of \(X(0,|\Omega|)\), equipped with the norm
  \[
  \|g\|_{X'(0,|\Omega|)}
  =
  \sup \left\{
  \int_0^{|\Omega|} |f(s)g(s)|\,ds
  :\,
  f\in X(0,|\Omega|),
  \ \|f\|_{X(0,|\Omega|)}\le 1
  \right\}.
  \]
The product space is  denoted by  
\[
L^d\odot X
:=
\bigl\{
h \, \, \text{measurable}:\ h=fg \text{ a.e. in }\Omega,\ f\in L^d(\Omega),\ g\in X(\Omega)
\bigr\},
\]
endowed with the norm
\[
\|h\|_{L^d\odot X}
:=
\inf\bigl\{
\|f\|_{L^d(\Omega)}\,\|g\|_{X(\Omega)}:\ h=fg \text{ a.e. in }\Omega
\bigr\}.
\]
Let \(\varphi_X\) denote the fundamental function of \(X\), defined by \(\varphi_{X}(t)=\|\chi_E\|_{X}\) whenever \(|E|=t\). Assume that $t^{1/d}/\varphi_X(t)\to 0$ as
$t\to 0^+$. Denote by \(\underline{\beta}_X\) and \(\overline{\beta}_X\) the lower and upper Boyd indices of \(X\), respectively. 
For \(a>1\), define the weighted Stieltjes transform \(S_a\) by
\[
(\mathrm{S}_af)(t)=t^{\frac1a-1}\int_0^t f(s)\,ds+\int_t^{|\Omega|}f(s)s^{\frac1a-1}\,ds,\qquad t\in(0,|\Omega|).
\]
Let 
\begin{equation}\label{classe-C}
	(\mathcal{C}) : \left\{
	\begin{aligned}
		(\mathcal{C}-i)\quad
		& 0<\underline{\beta}_X\le \overline{\beta}_X<1,
		\\
		(\mathcal{C}-ii)\quad
		& \text{assume that } (1+t)^{\frac d\alpha-1}\in X(0,|\Omega|), \\
		&Y=\left\{f:\ \|\mathrm{S}_{d/\alpha}(f^*)\|_{X(0,|\Omega|)}<\infty\right\}.
		\\  
		(\mathcal{C}-iii)\quad
		&  L^d \odot X \hookrightarrow Y 
		\\
		(\mathcal{C}-iv)\quad
		& \varpi_X(r):=\|s^{-1+1/d}\chi_{(0,r^d)}(s)\|_{X'(0,|\Omega|)} \to 0
		\quad \text{as } r\to 0^+,
		%\|\cdot\|_{\infty,B_r}\lesssim\frac{r}{\varphi_X(|B_{2r}|)}\,\|\cdot\|_{W^1X(B_r)}'.
	\end{aligned}
	\right.
\end{equation}
Condition \((\mathcal {C}-ii)\) guarantees the existence of an optimal space \(Y\), associated with \(X\), such that the Riesz potential operator is bounded from \(Y\) into \(X\); see \cite[\textup{Theorem}~6.4]{edmunds1995double}. As a consequence of our argument, we obtain the following result.

\begin{theo}
	Let \(X\in (\mathcal{C})\), and assume that \(\mathbf F\in X(\Omega)\) and \(g\in Y(\Omega)\). Then every weak solution \(u\in W^{1}X(\Omega)\) to equation~\eqref{eq.1} satisfies estimate~\eqref{est.1}; namely,
	\[
	\|u\|_{W^{1,X}(\Omega)}
	\le
	c\left(
	\|u\|_{L^1(\Omega)}
	+
	\|\mathbf F\|_{X(\Omega)}
	+
	\|g\|_{Y(\Omega)}
	\right),
	\]
	where \(c=c(d,\Omega,m,M,\omega_{\mathbf A},X,\Lambda)\) is a positive constant independent of \(u\), \(\mathbf F\), and \(g\). Here,
	\[
	\|\mathbf B\|_{X(\Omega)}
	+
	\|\mathbf C\|_{L^d(\Omega)}
	+
	\|\mathbf V\|_{Y(\Omega)}
	\le \Lambda .
	\]
\end{theo}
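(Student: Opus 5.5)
The plan is to rerun the proof of Theorem~\ref{main theorem} with $L^\psi$ replaced by $X$ and $L^\gamma$ by $Y$, checking that every ingredient used there has an r.i.\ counterpart supplied by $(\mathcal C)$. First, the analogue of Proposition~\ref{prop:operators}. For the quasi-potential $\mathcal Q_{2r}$, after the linear change of variables reducing $\mathbf A_0$ to the identity, $\nabla\mathcal Q_{2r}$ is a Calder\'on--Zygmund operator. By Boyd's interpolation theorem, condition $(\mathcal C\text{-}i)$, namely $0<\underline\beta_X\le\overline\beta_X<1$, gives boundedness of such operators on $X$ (and on $X'$). Poincar\'e's inequality in $W^1_0X$ then controls the zero-order part of the scaled norm $'\|\cdot\|_{W^1X(B_{2r})}$.

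For the potential $\mathcal P_{2r}$, I would use the pointwise bound $|\nabla\mathcal P_{2r}h|\lesssim I_1|h|$, where $I_1$ is the Riesz potential of order one. By O'Neil's inequality, $(I_1h)^*\lesssim \mathrm S_{d/\alpha}(h^{*})$ (this is where $(\mathcal C\text{-}ii)$ enters, with $\alpha=1$ as the relevant choice). The definition of $Y$ in $(\mathcal C\text{-}ii)$ is therefore exactly the statement that $\|\nabla\mathcal P_{2r}h\|_X\lesssim\|h\|_Y$, invoking \cite[Theorem~6.4]{edmunds1995double}. This replaces Lemma~\ref{lem:2} and the embedding step. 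Scale invariance of the constants in $r$ follows from the dilation-invariant structure of the kernels.

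Next comes the contraction estimate of Proposition~\ref{Thm:TR bound}. The term $(\mathbf A_r-\mathbf A_0)\nabla u$ is handled as before by the $L^\infty$ smallness coming from $\omega_{\mathbf A}$. For $\mathbf B_ru$ and $\mathbf V_ru$ I need an $L^\infty$ bound: writing $|u(x)|\lesssim\int_{B_{2r}}|x-y|^{1-d}|\nabla u(y)|\,dy$ and applying the Hardy--Littlewood inequality with the associate norm gives $\|u\|_{\infty,B_{2r}}\lesssim\varpi_X(r)\,\|\nabla u\|_{X}$. By $(\mathcal C\text{-}iv)$ this factor tends to $0$, while $\|\mathbf B\|_{X}$ and $\|\mathbf V\|_Y$ remain bounded by $\Lambda$. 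I should double-check that $\mathbf V u$ lands in $Y$; this needs $\|\mathbf Vu\|_Y\le\|\mathbf V\|_Y\|u\|_\infty$, which holds because $Y$ is a lattice norm.

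The term $\mathbf C_r\cdot\nabla u$ is the delicate one. $(\mathcal C\text{-}iii)$ gives $\|\mathbf C\nabla u\|_Y\lesssim\|\mathbf C\|_{L^d(B_{2r})}\|\nabla u\|_X$, and absolute continuity of the $L^d$ norm makes the first factor small. Together these give $\delta(r)\le\tfrac12$ for small $r$, and a Neumann series yields the analogue of Lemma~\ref{Lem: principal}.

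Finally, the compactness-contradiction argument and the flattening/reflection at the boundary are copied from the proof of Theorem~\ref{main theorem}. Reflexivity and separability of $X$ provide weak compactness of the normalized sequence. The embedding $W^1X\hookrightarrow L^\infty$ from $(\mathcal C\text{-}iv)$, together with Arzel\`a--Ascoli, upgrades weak convergence to strong convergence in $X$, so the limit is $0$ because the $L^1$ norms tend to $0$. Compactness of $\mathcal P_{2r}$ on the commutator term $\partial\chi\,\mathbf A\nabla u$ uses boundedness $X\to W^1X$ composed with a compact embedding $W^1X\hookrightarrow X$. This requires $X\hookrightarrow Y$ locally, which follows from $(\mathcal C\text{-}iii)$ with $f=1$.

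The main obstacle I expect is the potential estimate, that is, showing $\nabla\mathcal P_{2r}:Y\to X$ with $r$-independent constants on balls and half-balls (odd reflection). I would first try the pointwise domination by $I_1$ plus O'Neil, falling back on the cited optimal-range theorem.
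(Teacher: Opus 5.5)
Your plan follows the route the paper intends, since it presents this theorem as a consequence of rerunning the proof of Theorem~\ref{main theorem}. The local part of your plan is sound:
\begin{itemize}
\item Boyd's theorem under $(\mathcal C\text{-}i)$ handles $\nabla\mathcal Q_{2r}$.
\item The bound $|\nabla\mathcal P_{2r}h|\lesssim I_1|h|$, with O'Neil's inequality and the definition of $Y$ in $(\mathcal C\text{-}ii)$, handles the potential.
\item The Hardy--Littlewood bound $\|u\|_{\infty,B_{2r}}\lesssim\varpi_X(cr)\|\nabla u\|_X$ handles the $\mathbf B$ and $\mathbf V$ terms.
\item $(\mathcal C\text{-}iii)$ handles $\mathbf C\cdot\nabla u$.
\end{itemize}

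The gap is in the compactness step of the contradiction argument. There you need $\mathcal A_{k,n}=\mathcal P_{2r}(\mathbf q_{k,n,r})\to0$ in $W^1X(B_{2r})$, not merely in $X$. The gradient $\nabla\mathcal A_{k,n}$ enters $\widetilde{\mathbf E}_{k,n,r}$ through $\mathbf A_r\nabla\mathcal A_{k,n}$ and $\partial_i\mathcal A_{k,n}$. The final step $\widetilde u_{k,n}=w_{k,n}+\mathcal A_{k,n}\to0$ in $W^1X$ needs it as well. Your mechanism is $\mathcal P_{2r}\colon X\hookrightarrow Y\to W^1X$ followed by the compact embedding $W^1X\hookrightarrow X$. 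This only makes $\mathcal P_{2r}$ compact as a map $X\to X$. Applied to $\mathbf q^{(2)}_{k,n,r}=\sum_j m_k^j\partial_j\widetilde u_{k,n}$, which converges only weakly, it yields $\mathcal P_{2r}(\mathbf q^{(2)}_{k,n,r})\to0$ in $X$ and says nothing about its gradient. First-order bounds alone cannot upgrade this.

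This is where your remark that the potential estimate ``replaces Lemma~\ref{lem:2}'' loses something. The paper uses the second-order estimate a second time: $\mathcal P_{2r}(\mathbf q^{(2)}_{k,n,r})$ is bounded in $W^{2,\psi}$, hence relatively compact in $W^{1,\psi}$. In the r.i.\ setting you can restore this as follows.
\begin{itemize}
\item The second derivatives of the potential are given by a Calder\'on--Zygmund operator plus a multiple of the identity, so they are bounded on $X$ by Boyd's theorem and $(\mathcal C\text{-}i)$.
\item Hence $\nabla\mathcal P_{2r}(\mathbf q^{(2)}_{k,n,r})$ is bounded in $W^1X(B_{2r})$.
\item The compact embedding $W^1X(B_{2r})\hookrightarrow X(B_{2r})$, together with weak convergence to $0$, then gives $\nabla\mathcal P_{2r}(\mathbf q^{(2)}_{k,n,r})\to0$ in $X$.
\end{itemize}
Alternatively, show directly that $\nabla\mathcal P_{2r}$ is compact on $X$. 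Cut its kernel near the diagonal. The near part has $L^1$ and $L^\infty$ operator norms $O(\varepsilon)$, hence $X$-norm $O(\varepsilon)$ by Calder\'on's interpolation theorem. The smooth remainder maps bounded sets of $X\subset L^1$ into equicontinuous families, so it is compact.

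A smaller point: Arzel\`a--Ascoli needs equicontinuity, not just the bare embedding $W^1X\hookrightarrow L^\infty$. Use $(\mathcal C\text{-}iv)$ in the form $|u(x)-u(y)|\lesssim\varpi_X(c|x-y|)\|u\|_{W^1X}$. This follows from the same Hardy--Littlewood argument applied on balls of radius comparable to $|x-y|$.
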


 In the Orlicz setting \(X=L^\psi\), a sharp characterization of Condition \((\mathcal {C}-ii)\) was obtained in \cite{cianchi1999strong}. In this case, the optimal associated space is \(Y=L^\gamma\), and the criterion is expressed in terms of the Riesz potential
\[
I_{\alpha}f(x)=\int_{\mathbb{R}^{d}}\frac{f(y)}{|x-y|^{\,d-\alpha}}\,dy,
\qquad 0<\alpha<d.
\]
To this end, one defines the Young functions
\[
\gamma_{d/\alpha}(s)=\int_{0}^{s} r^{\alpha/(d-\alpha)} 
\left(\Phi_{d/\alpha}^{-1}\left(r^{d/(d-\alpha)}\right)\right)^{d/(d-\alpha)}\,dr,
\qquad
\Phi_{d/\alpha}(s)=\int_{0}^{s}\frac{\gamma^*(t)}{t^{1+d(d-\alpha)}}\,dt,
\]
and 
\[
\psi_{d/\alpha}^*(s)=\int_{0}^{s} r^{\alpha/(d-\alpha)}
\left(\Psi_{d/\alpha}^{-1}\left(r^{d/(d-\alpha)}\right)\right)^{d/(d-\alpha)}\,dr,
\qquad
\Psi_{d/\alpha}(s)=\int_{0}^{s}\frac{\psi(t)}{t^{1+d/(d-\alpha)}}\,dt.
\]
Then $I_\alpha$ is bounded from $L^\gamma(\mathbb{R}^d)$ to $L^\psi(\mathbb{R}^d)$ if and only if
\[
\int_{0}\frac{\psi(t)}{t^{1+d/(d-\alpha)}}\,dt<\infty,
\qquad
\int_{0}\frac{\gamma^*(t)}{t^{1+d/(d-\alpha)}}\,dt<\infty,
\]
$\psi_{d/\alpha}\prec \gamma$ and $\psi\prec \gamma_{d/\alpha}$; see \cite[Theorem~2($ii$)]{cianchi1999strong}. 

The classical Lebesgue framework is recovered by choosing
\[
\psi(s)=s^{\frac{dp}{d-\alpha p}},
\qquad 1<\alpha p<d,
\]
which yields the associated space
\[
\gamma(s)=s^{p}.
\]
In the borderline case, \cite{o1960fractional} proved that the choice
\[
\psi(s)=s^{\frac{d}{d-\alpha}}
\]
admits the optimal domain space
\[
\gamma(s)=s\log^{1-\alpha/d}(1+s)
\]
on sets of finite measure. At the exponential critical threshold, \cite{strichartz1972note,trudinger1967imbeddings} established that
\[
\psi(s)=\exp\!\left(s^{\frac{d}{d-\alpha}}\right)-1
\]
is optimally associated with
\[
\gamma(s)=s^{d/\alpha}.
\]
Further refinements were obtained in \cite{edmunds1995double}, where it was shown, in particular, that the double-exponential space
\[
\psi(s)=\exp\!\left(\exp\!\left(s^{\frac{d}{d-\alpha}}\right)\right)-e
\]
corresponds to
\[
\gamma(s)=s^{d/\alpha}\log^{(d-\alpha)/d}(1+s).
\]

Let \(X=L^{p,q}(\Omega)\) be a Lorentz space over a domain \(\Omega\subset \mathbb R^d\), with \(1<p<\infty\) and \(1\le q\le \infty\). Condition \((\mathcal {C}-i)\) is fulfilled in the whole range of parameters; see \cite{hunt1966spaces,sharpley1988interpolation}. Concerning condition \((\mathcal {C}-ii)\), the optimal associated space is given by \(Y=L^{r,q}(\Omega)\), where \(r\) is defined through
\[
\frac1p=\frac1r-\frac1d,
\]
according to the boundedness properties of the Riesz potential of order \(1\); see \cite{o1963convolution}.

Moreover if \(\Omega\) is a bounded regular domain, then
\[
W^{1}L^{p,q}(\Omega)\hookrightarrow C(\Omega)
\]
whenever \(p>d\), for every \(q\), and also in the limiting case \(p=d\) and \(q=1\); see \cite{cianchi1998sobolev,ranjbar2009embedding}. Finally, condition \((\mathcal {C}-iv)\) follows again from \cite{o1963convolution}.

Hence the Lorentz space \(L^{p,q}(\Omega)\) satisfies all assumptions in \((\mathcal C)\) precisely when either \(p>d\), or \(p=d\) and \(q=1\).

\subsection{\textbf{Open problems}}

Let $\mathbf A, \mathbf B, \mathbf C, \mathbf V$ satisfy the structural assumptions associated with \eqref{eq.1}, and let $u \in W^{1}X(\Omega)$ be a weak solution.

We ask whether the \textit{a priori} estimate
\[
\|u\|_{W^{1}X(\Omega)}
\lesssim
\|u\|_{L^{1}(\Omega)}
+
\|\mathbf F\|_{X(\Omega)}
+
\|g\|_{Y(\Omega)}
\]
is equivalent to the boundedness of the Riesz potential operator
\[
I_1 : Y(\Omega) \to X(\Omega),
\]
in the framework of general rearrangement-invariant Banach function spaces, beyond the classical Orlicz and Lorentz scales.

More precisely, does this equivalence remain valid over the full class of rearrangement-invariant spaces, without additional assumptions on the Boyd indices or on the growth properties of the fundamental function?

\subsection*{Acknowledgements}The first author gratefully acknowledges the support of the National Centre for Scientific and Technical Research (CNRST), Morocco, through the PhD Associate Scholarship Program (PASS).

\bibliographystyle{abbrv}
\bibliography{sn-bibliography}
\end{document}